\newtheorem{theorem}{Theorem}
\newtheorem{lemma}[theorem]{Lemma}
\newtheorem{proposition}[theorem]{Proposition}
\newtheorem{corollary}[theorem]{Corollary}
\theoremstyle{definition}
\newtheorem{definition}[theorem]{Definition}
\newtheorem{remark}[theorem]{Remark}
\newtheorem{example}[theorem]{Example}
\DeclareMathOperator{\GL}{GL}
\title{Invariants of Quadratic Forms and applications in Design Theory} 
\author[1]{Oliver W.\ Gnilke}\ead{owg@math.aau.dk}
\author[2]{Padraig \'O Cath\'ain}
\ead{padraig.ocathain@dcu.ie}
\author[3]{Oktay Olmez}
\ead{olmezoktay@gmail.com}
\author[4]{Guillermo Nu\~nez Ponasso}
\ead{gcnunez@wpi.edu} 
\affiliation[1]{organization={ Department of Mathematical Sciences, Aalborg University},
postcode={9220 Aalborg}, 
country={Denmark}}
\affiliation[2]{organization={Fiontar agus Scoil na Gaeilge, Dublin City University},
postcode={Dublin 9},
country={Ireland}}
\affiliation[3]{organization={Afiniti},
postcode={Hamilton},
country={Bermuda}}
\affiliation[4]{organization={Worcester Polytechnic Institute},
postcode={Worcester, MA},
country={United States of America}}
\begin{document}
\thispagestyle{empty}

\begin{abstract} 

The study of regular incidence structures, such as projective planes and symmetric block designs, 
is a well established topic in discrete mathematics. The work of Bruck, Ryser and Chowla in the mid-twentieth 
century applied the Hasse-Minkowski local-global theory for quadratic forms to derive non-existence results 
for certain design parameters. 

Several combinatorialists have provided alternative proofs of this result, replacing conceptual arguments with 
algorithmic ones. In this paper, we show that the methods required are purely linear-algebraic and 
are no more difficult conceptually than the theory of the Jordan Canonical Form. Computationally, they are rather easier.
We conclude with some classical and recent applications to design theory, including a novel application to the 
decomposition of incidence matrices of symmetric designs.
\end{abstract} 
\begin{keyword}
  quadratic forms \sep incidence matrix \sep symmetric design  \MSC[2020] 05B20 \sep 15A63
\end{keyword}

\maketitle
\section{Introduction}

An \textit{incidence structure} consists of \textit{points} and \textit{lines} together with an \textit{incidence relation}, which specifies which points are incident with which lines. A finite incidence structure is represented by a $\{0,1\}$-matrix with rows labeled by lines and columns by points which contains a $1$ in position $(i,j)$ if point $p_{j}$ is incident with line $L_{i}$ and a $0$ otherwise. Very often, a combinatorial regularity is imposed on such a structure by requiring that each line contain a fixed number $k$ of points and each pair of points lie on $\lambda$ lines. In this case, it is easily seen that incidence structure, now called a $2$-design, obeys the equation 
\[ M^{\top}M = (k-\lambda) I_{v} + \lambda J_{v} \,\]
for non-negative integers $v,k,\lambda$, where $J_{v}$ is the $v\times v$ all-ones matrix. Similarly, a strongly regular graph is described by a (symmetric) adjacency matrix having the property that 
\[ M^{\top}M = k I_{v} + \lambda M + \mu (J_{v} - I_{v} - M) \,, \] 
where $v,k,\lambda,\mu$ are the parameters associated with the strongly regular graph. Many other combinatorial 
structures admit a description in terms of a \textit{Gram} matrix, that is a matrix of the form $M^{\top}M$. 
The study of such incidence structures is the central topic of the theory of combinatorial designs, on which many books have 
been written, \cite{BJL, DRG, HughesPiper}.

The study of quadratic forms, particularly over the integers and rational numbers, is a classical topic in number theory. 
A quadratic form is a generalization of an inner product (defined over an arbitrary field); the most 
convenient description is as follows.
\begin{definition} 
Let $V$ be a vector space with a fixed basis $B$. With respect to this basis, a quadratic form is described by a 
symmetric matrix $Q$: 
\[ q(v) = v^{\top} Q v\,.\] 
Two quadratic forms are \textit{similar} if they are equal up to a change of basis. Thus, similar forms are related by 
\[ q_{2}(Av) = q_{1}(v) \] 
for a fixed invertible matrix $A$. The corresponding matrices satisfy $A^{\top}Q_{2}A = Q_{1}$, and are said to be \textit{congruent}.
\end{definition} 

Congruence is an equivalence relation on the set of square matrices over a fixed field, 
and each congruence class describes a unique quadratic form. The fundamental problem in 
classifying quadratic forms is describing the congruence classes of square matrices over a given 
field. This classification depends essentially on the structure of the group of squares of the underlying field. 
Every non-zero element of $\mathbb{C}$ is a square, and it turns out that the rank is the only invariant of a 
quadratic form in dimension $n$ over $\mathbb{C}$. There are infinitely many classes of quadratic forms 
in each dimension over $\mathbb{Q}$, with necessary and sufficient conditions for the equivalence of forms described 
by the Hasse-Minkowski theorem. Historically, due to close connections to representations of integers as sums of 
squares, this is considered a classical topic in number theory, \cite{OMeara, Scharlau,Serre}.

The defining equation $M^{\top}I_{v}M = (k - \lambda) I_{v} + J_{v}$ for a symmetric $2-(v,k,\lambda)$ design 
may be interpreted as a statement about quadratic forms: it says that the matrices $I_{v}$ and 
$(k - \lambda) I_{v} + J_{v}$ are congruent. The contrapositive statement is more interesting: if these matrices can be shown to belong to different congruence classes, no design with the parameters $2-(v,k,\lambda)$ can exist. The original motivation of Bruck and Ryser was to show that there are infinitely many parameter sets where no projective plane can exist. The result was later extended in collaboration with Chowla to cover all symmetric designs.

\begin{theorem}[Bruck-Ryser-Chowla]
Suppose that $(v,k,\lambda)$ are the parameters of a symmetric block design. 
\begin{enumerate} 
\item If $v$ is even then $k-\lambda$ is the square of an integer. 
\item If $v$ is odd then the Diophantine equation 
\[ x^{2} - (k-\lambda)y^{2} - (-1)^{\frac{v-1}{2}}\lambda z^{2} = 0 \] 
has a non-trivial solution in integers $x,y,z$.
\end{enumerate}
\end{theorem}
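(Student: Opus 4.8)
The plan is to treat both cases through the single observation already recorded above: the design equation $M^{\top}M = (k-\lambda)I_{v}+\lambda J_{v}$ asserts that the rational quadratic forms $I_{v}$ and $Q := (k-\lambda)I_{v}+\lambda J_{v}$ are congruent over $\mathbb{Q}$ (indeed over $\mathbb{Z}$, via $A=M$). Before separating cases I would record the one combinatorial input needed, the standard counting identity $k(k-1)=\lambda(v-1)$ for a symmetric design, which rearranges to $(k-\lambda)+\lambda v = k^{2}$. Since $J_{v}$ acts as $v$ on the all-ones vector and as $0$ on its complement, $Q$ has eigenvalues $k^{2}$ (once) and $k-\lambda$ (with multiplicity $v-1$), whence
\[ \det Q = k^{2}(k-\lambda)^{v-1}. \]
Comparing this with $\det(M^{\top}M)=(\det M)^{2}$ gives $(\det M)^{2}=k^{2}(k-\lambda)^{v-1}$, which is the engine for both parts.

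For part (1) I would exploit parity. When $v$ is even, $v-1$ is odd, so in $(\det M)^{2}=k^{2}(k-\lambda)^{v-1}$ the integer $k-\lambda$ appears to an odd power on the right while the left-hand side is a perfect square; dividing out the even part shows that $k-\lambda$ is the square of a rational number, and hence, being an integer, the square of an integer. This step is elementary and uses nothing beyond unique factorisation.

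For part (2) I would use the congruence of forms together with the local--global machinery of Hasse and Minkowski alluded to above. When $v$ is odd, $v-1$ is even, so $\det Q=k^{2}(k-\lambda)^{v-1}$ is already a square; the discriminant, the rank, and the real signature of $Q$ and $I_{v}$ therefore agree automatically (the latter because $Q=M^{\top}M$ is positive definite). The only remaining congruence invariants are the Hasse--Witt invariants $c_{p}$ at each prime $p$, and the content of the theorem is precisely the identity $c_{p}(Q)=c_{p}(I_{v})$ that $\mathbb{Q}$-congruence forces. Concretely I would diagonalise $Q$ over $\mathbb{Q}$, using the fact that the $j\times j$ leading principal minor of $aI_{v}+bJ_{v}$ equals $a^{j-1}(a+jb)$ to read off the successive pivots, and then reduce the equivalence $I_{v}\cong Q$ by Witt cancellation of the common definite part to a single \emph{ternary} form. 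The isotropy of that ternary form over $\mathbb{Q}$, equivalent by Hasse--Minkowski to isotropy over $\mathbb{R}$ and every $\mathbb{Q}_{p}$, is exactly the assertion that
\[ x^{2}-(k-\lambda)y^{2}-(-1)^{\frac{v-1}{2}}\lambda z^{2}=0 \]
has a nontrivial integral solution.

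The hard part will be the bookkeeping that produces the sign $(-1)^{(v-1)/2}$. Tracking the Hasse invariant through the diagonalisation and cancellation involves a product of Hilbert symbols of the shape $(k-\lambda,k-\lambda)_{p}=(k-\lambda,-1)_{p}$, whose number is governed by $\binom{v-1}{2}$; writing $v-1=2m$ one has $\binom{2m}{2}=m(2m-1)$, so $(-1)^{\binom{v-1}{2}}=(-1)^{m}=(-1)^{(v-1)/2}$, and it is this parity that distinguishes $v\equiv 1$ from $v\equiv 3 \pmod 4$. The product formula $\prod_{p}(a,b)_{p}=1$ is then what lets the family of local identities $c_{p}(Q)=c_{p}(I_{v})$ be assembled into the single global ternary equation above. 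Everything else is routine linear algebra over $\mathbb{Q}$; the sign is the one place where care is genuinely required.
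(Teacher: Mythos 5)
Your part (1) and your overall strategy for part (2) --- force the local invariants of $I_{v}$ and $Q=(k-\lambda)I_{v}+\lambda J_{v}$ to agree, recognise the resulting Hilbert-symbol condition as local isotropy of a ternary form, and globalise with Hasse--Minkowski --- coincide with the paper's (Theorem \ref{BRC} and the remark following it). But two steps in your part (2) are genuine gaps. The first is ``reduce the equivalence $I_{v}\cong Q$ by Witt cancellation of the common definite part to a single ternary form.'' Over $\mathbb{Q}$ this step cannot be carried out: your pivot diagonalisation of $Q$ has entries $k,\ n(n+2\lambda)/(n+\lambda),\ n(n+3\lambda)/(n+2\lambda),\dots$ (writing $n=k-\lambda$), which share no common sub-form with $\langle 1,\dots,1\rangle$, so there is nothing to cancel; and writing $Q\cong\langle 1\rangle^{v-3}\perp\tau$ over $\mathbb{Q}$, which is what cancellation would require, presupposes that $Q$ represents $1$ rationally --- essentially the thing being proved. (This does work over each $\mathbb{Q}_{p}$, but then $\tau$ depends on $p$ and you are back to comparing invariants.) The mechanism that actually produces the ternary equation is different: equality of invariants gives $(n,(-1)^{(v-1)/2}\lambda)_{p}=1$ for every prime, and this \emph{is}, by the definition of the Hilbert symbol, the statement that $z^{2}=nx^{2}+(-1)^{(v-1)/2}\lambda y^{2}$ has a nontrivial $\mathbb{Q}_{p}$-solution; Hasse--Minkowski for ternary forms then yields a rational, hence integral, solution. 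Your closing appeal to the product formula $\prod_{p}(a,b)_{p}=1$ as the globalising device is also misplaced: its role is only to let one ignore $p=2$ and $\infty$.

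The second gap is that your bookkeeping produces the sign but not the $\lambda$. The invariant of $Q$ is not just a power of $(n,n)_{p}$: by Proposition \ref{prop:combmat} it equals $(n,n)_{p}^{\binom{v-1}{2}}(n,v)_{p}$, and converting $(n,v)_{p}$ into $(n,\lambda)_{p}$ is exactly where the design identity enters: $k^{2}-n=v\lambda$, and $(n,k^{2}-n)_{p}=1$ because $nX^{2}+(k^{2}-n)Y^{2}=1$ has the explicit solution $X=Y=k^{-1}$, whence $(n,v)_{p}=(n,\lambda)_{p}$. You recorded $(k-\lambda)+\lambda v=k^{2}$ at the outset but never use it in part (2); without it the computation stalls at a condition involving $v$ rather than $\lambda$. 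Moreover, the diagonalisation you chose makes the computation harder than ``routine bookkeeping'': with pivots $m_{j}/m_{j-1}$, $m_{j}=n^{j-1}(n+j\lambda)$, the Hasse invariant contains cross terms $(n+i\lambda,\,n+(i+1)\lambda)_{p}$ along the arithmetic progression of minors, and these do not telescope under any standard Hilbert-symbol identity. This is precisely why the paper abandons the minor/pivot route for this computation and instead diagonalises by an explicit rational eigenbasis (Proposition \ref{polarBRC}), obtaining $\langle k^{2}v,\,2\cdot 1\cdot n,\,\dots,\,v(v-1)n\rangle$, and then strips off the factor $\langle v,\,2\cdot 1,\,\dots,\,v(v-1)\rangle$ with Proposition \ref{prop:split}, so that everything telescopes via $(n,i(i-1))_{p}=(n,i)_{p}(n,i-1)_{p}$.
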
 

The Bruck-Ryser-Chowla theorem was revolutionary in introducing algebraic techniques to design theory, 
though these were not universally adopted by combinatorialists. This (standard) formulation 
does not reference quadratic forms or their invariants. In fact, a series of papers were devoted to 
finding less conceptual and more explicit algorithmic and computational proofs of the Bruck-Ryser-Chowla theorem\footnote{Marshall Hall's foundational text \textit{Combinatorial Theory} may have something to do with this: he adopts the rather cumbersome language of linear forms in Chapter 10, and refers to the \textit{detailed and troublesome calculation} required to establish rational equivalence of certain forms. We aim to show that the language of linear algebra renders these calculations rather straightforward, once the underlying notations and concepts are established.}. In the opinion of the authors, these had the effect of obscuring the underlying ideas of the proof. Testing congruence of matrices is computationally rather easier than testing conjugacy, which requires the Jordan 
Canonical Form. At a high level, this is because the JCF requires manipulation of the characteristic polynomial, 
which may have irreducible factors of arbitrarily high degree, while working with quadratic forms requires only (multi-variate) 
quadratic functions, for which additional methods are available. 

In the first half of this paper, we give a self-contained and relatively elementary construction 
for a set of invariants for quadratic forms. In Section 2, we give some standard definitions and background 
before classifying the quadratic forms in dimension $2$ over $\mathbb{Q}$ by elementary means. We conclude the section 
with a discussion of Legendre and Hilbert symbols, and show that the classification in dimension $2$ is entirely captured by 
the Hilbert symbol. In Section $3$, we follow arguments due originally to Pall to construct invariants of congruence classes of matrices in an arbitrary dimension as a product of Hilbert symbols, \cite{JonesB, Serre}. The proof requires manipulation of matrix minors and Hilbert symbols, 
but is entirely elementary. In fact, the content of the Hasse-Minkowski Theorem is that, together with some obvious invariants (such as the square-free part of the determinant), the set of invariants constructed by Pall is \textit{complete}: two quadratic forms having the same value at all invariants are necessarily similar. This proof does require the $p$-adic integers in an essential way, and is more profound than the construction of the invariants alone. 
Our paper is entirely self contained: a complete proof is given of the results that we require. 

In Section 4, we prove the theorems of Bruck-Ryser and Bruck-Ryser-Chowla. We also 
discuss applications of this theory to group-divisible designs (the Bose-Connor theorem) 
and to maximal determinant matrices. While our proof of the Bruck-Ryser theorem is not the most elegant possible, it may be the most straightforward. It depends only on the computation of invariants of certain quadratic forms, which is purely algorithmic and can be carried out by hand. The application to the decomposition of symmetric designs is new.

Since our intended audience consists of combinatorialists, we make the following assumptions (which 
are always satisfied in our applications). 

\begin{enumerate} 
	\item Vector spaces are finite dimensional over $\mathbb{Q}$, and have a specified basis. 
	Matrices are square with rational entries, and expressed with respect to the specified basis. 
	The transpose operator imposes the standard inner product (a bilinear form) 
	on the underlying vector space. 
	\item The matrix $S$ of a quadratic form will always be assumed symmetric positive definite.	
	By Sylvester's criterion, all principal minors of $S$ can be assumed to be strictly positive. 
	This assumption circumvents several technical complications in the more general case caused by vanishing minors. 
	\item Rather than deciding the equivalence of two arbitrary forms, we will deal with the simpler case of 
	deciding whether $S$ and the quadratic form represented by the identity matrix are equivalent. 
	Testing congruence to the identity matrix is precisely equivalent to deciding whether 
	$S$ is a \textit{Gram matrix}, which is the main concern 
	in applications to design theory. In particular, we will assume that the determinant of $S$ is the 
	square of an integer. In the language of quadratic forms, the square-free part of the 
	determinant is called the \textit{discriminant}, and we assume this is $1$. 	
\end{enumerate} 

Since extensive literature on the Bruck-Ryser-Chowla theorem exists, the need for another 
discussion should perhaps be justified. Many textbooks on number theory 
include complete proofs of the Hasse-Minkowski theorem. These are often followed 
with a remark that the Bruck-Ryser-Chowla theorem on the non-existence of certain 
projective planes follows immediately. In the combinatorics literature, many authors follow 
the treatment of Hall: after giving a few arbitrary-seeming rules 
for manipulating quadratic forms with heavy emphasis on Witt's Cancellation Lemma 
and the Lagrange Four-Square theorem, a process is sketched to allow reduction of the 
forms to dimension at most three, to which an \textit{ad hoc} argument is applied. The final 
statement is then written in terms of Diophantine equations, obscuring the method of the 
proof. An alternative proof by Lander constructs a self-dual code from the incidence matrix 
of a design. It uses the classification of quadratic forms over finite fields to recover most of 
the non-existence results obtained by Bruck-Ryser-Chowla, \cite{Lander}.

We provide a statement in terms of Hilbert symbols which is no more complex than the Diophantine 
equation formulation. It is more tractable computationally and will allow the interested reader 
to apply the theory to their applications. We illustrate this with several applications in the final 
section of the paper.

\section{Background, Hilbert symbols and Dimension 2} \label{Background}

This material is, of course, well known. However, it is often presented in rather greater generality (as the theory of bilinear 
forms over an arbitrary field) within a more advanced graduate course or in lesser generality (as the theory of inner product 
spaces) in a slightly less advanced course. We aim for a middle ground: where it does not lead to additional complication we 
state the theory in more general terms, but our applications will always require that $k = \mathbb{Q}$.

Let $V$ be a finite dimensional vector space of dimension $n$ over a field $k$, of characteristic different 
from $2$. It will be convenient to fix a basis, $B$, of $V$. This gives an explicit isomorphism between elements 
of $\textrm{End}(V)$ and the matrix algebra $\textrm{M}_{n}(k)$. To be entirely clear: if a linear transformation $T$ is defined by $Tb_{j} = \sum_{i=1}^{n} t_{ij}b_{j}$ then the matrix of $T$ is $[t_{ij}]_{i,j}$ where $i,j \in [1, \ldots, n]$. As defined in the introduction, a quadratic form $Q: V \rightarrow k$ is a function $Q: v \mapsto v^{\top} Av$ for some matrix $A \in M_{n}(k)$. The matrix $A$ \textit{represents} the quadratic form, and the pair $(V, Q)$ is a \textit{quadratic space}. The form is \textit{non-degenerate} if $A$ has full rank. 

Provided that the field is not of characteristic $2$, the matrix $A$ may be taken to be symmetric, 
for $Q(v) = v^{\top}Av = v^{\top}A^{\top}v$ implies that $Q(v) = v^{\top}\left( \frac{1}{2} A + \frac{1}{2}A^{\top}\right)v$. 
We will always assume that $A$ is symmetric. Associated to $Q$ there is a \textit{bilinear form} defined by 
\[ \langle x,y\rangle = \frac{1}{2} \left( Q(x+y) - Q(x) - Q(y) \right) \,.\] 
Working in terms of matrices, $\langle x,y \rangle = x^{\top}Ay$. From a symmetric bilinear form, 
we recover a quadratic form as $Q(v) = \langle v,v\rangle$; these concepts are equivalent 
in characteristic different from $2$. We will deal almost exclusively with the quadratic forms in this text. 

Provided that the field satisfies $\mathbb{Q} \subseteq k \subseteq \mathbb{R}$, a form is defined to be 
\textit{positive definite} if $Q(x) > 0$ for all non-zero $x \in V$. A form is positive definite if and only if 
all eigenvalues of an associated matrix $A$ are positive. Like the standard inner product, represented by 
the matrix $I_{n}$, these can be considered functions 
that assign lengths to vectors, just as the corresponding bilinear form describes the angle between 
vectors by setting the cosine of the angle $\angle(u,v)$ to be $\langle u,v\rangle Q(u)^{-1/2}Q(v)^{-1/2}$. 

There are multiple normal forms to which symmetric matrices can be reduced. For our purposes, the following 
reduction will suffice. 

\begin{proposition} \label{RowOps}
Suppose that $S$ is a symmetric matrix defined over $\mathbb{Q}$. There exists a
matrix $M$ with entries in $\mathbb{Q}$ such that $S' = M^\top SM$ is a diagonal matrix. Furthermore, the entries of $S'$ 
can be taken to be square-free integers arranged in increasing order.
\end{proposition}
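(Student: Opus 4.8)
The plan is to argue by induction on the dimension $n$, realising the diagonalisation through symmetric Gaussian elimination (equivalently, Lagrange's method of completing the square), and then to rescale and reorder the diagonal entries. The organising principle is that a column operation performed together with the matching row operation is precisely a congruence: if $E$ is the elementary matrix effecting some column operation on $S$, then $E^{\top}SE$ applies the transposed operation to the rows, so congruence classes are exactly the objects we are free to move within. Since we make \emph{no} positive-definiteness assumption here, all three of the cases below must be treated.

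For the inductive step I would take a nonzero symmetric $S$ and split into cases according to its diagonal. If some diagonal entry $s_{ii}$ is nonzero, a permutation congruence $P^{\top}SP$ (permutation matrices satisfy $P^{\top}=P^{-1}$, so this simultaneously permutes rows and columns and preserves symmetry) moves it to the $(1,1)$ position. Using $s_{11}$ as a pivot, the operation that subtracts $(s_{1j}/s_{11})$ times the first column from the $j$-th column, applied symmetrically to the rows, clears the entire first row and column; because $S$ is symmetric these operations are mutually consistent. This exhibits $S$ as congruent to a block-diagonal matrix $(s_{11}) \oplus S_{1}$, where $S_{1}$ is symmetric of size $n-1$, and the inductive hypothesis finishes this case.

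The only subtle situation, and exactly the one that a positive-definiteness hypothesis would eliminate, is when every diagonal entry vanishes yet $S \neq 0$. Here I would select an off-diagonal $s_{ij}\neq 0$ and replace the $i$-th basis vector by $e_{i}+e_{j}$; since $q(e_{i}+e_{j}) = s_{ii} + 2s_{ij} + s_{jj} = 2s_{ij}$ and we work over $\mathbb{Q}$, this manufactures a nonzero diagonal entry and returns us to the previous case. I expect this to be the main (if mild) obstacle: one must verify that a nonzero pivot can \emph{always} be produced from a nonzero form, and this is precisely the step that would fail in characteristic $2$, which is why the ambient field must be restricted.

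It remains to normalise the diagonal form $\mathrm{diag}(d_{1},\dots,d_{n})$. Scaling the $i$-th basis vector by $c \in \mathbb{Q}^{\times}$ is a congruence replacing $d_{i}$ by $c^{2}d_{i}$, so each $d_{i}$ may be adjusted by any nonzero rational square. Writing a nonzero $d_{i}=a/b$ in lowest terms and factoring the integer $ab = mk^{2}$ with $m$ square-free, the choice $c=b/k$ sends $d_{i}$ to the square-free integer $m$; any zero entries, which occur exactly when $S$ is degenerate, are left unchanged. A final permutation congruence sorts the entries into increasing order, completing the argument.
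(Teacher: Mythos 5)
Your proof is correct, but it takes a genuinely different route from the paper's. The paper avoids induction entirely: it invokes the theory of the row echelon form to produce a single matrix $M^{\top}$ with $M^{\top}S$ upper triangular, notes that symmetry of $S$ makes $SM=(M^{\top}S)^{\top}$ lower triangular, and concludes by associativity that $M^{\top}SM$ is simultaneously upper and lower triangular, hence diagonal; integrality is then arranged by scaling with a common denominator $d$, after which squares are stripped and the entries permuted. Your argument is instead the classical Lagrange diagonalisation: induction on the dimension via symmetric Gaussian elimination, splitting off a block $(s_{11})\oplus S_{1}$ when a nonzero pivot exists, together with the explicit repair $e_{i}\mapsto e_{i}+e_{j}$, which manufactures the diagonal value $q(e_{i}+e_{j})=2s_{ij}\neq 0$ when the whole diagonal vanishes. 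That case analysis is the real dividend of your version: the paper's echelon-form argument is tacit about zero pivots (where the needed row swap must be applied symmetrically and disturbs the triangularity bookkeeping), confronting the issue only informally in the worked example that follows, and in the paper's applications the point is moot because the standing positive-definiteness assumption, via Sylvester's criterion, guarantees nonzero pivots throughout. Since the proposition as stated allows an arbitrary rational symmetric matrix, your treatment is the more complete one, and your remark that the $e_{i}+e_{j}$ trick is exactly what fails in characteristic $2$ correctly locates the hypothesis on the field. Your per-entry normalisation (write $d_{i}=a/b$ in lowest terms, factor $ab=mk^{2}$ with $m$ square-free, scale by $b/k$) is a tidy alternative to the paper's clear-denominators-then-strip-squares step; the one shared cosmetic blemish is that a degenerate $S$ leaves zeros on the diagonal, and $0$ is not a square-free integer, a boundary point that the paper's statement elides as well and that you at least flag.
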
 

\begin{proof} 
If $X = RY$ for matrices of compatible sizes, then the rows of $X$ are linear combinations of the rows of $Y$. 
Equivalently, $X$ is obtained from $Y$ by a sequence of row operations. Similarly, if $X = YC$ then the columns of 
$X$ are linear combinations of the columns of $Y$. 

By the theory of the row echelon form, there exists a matrix $M^{\top}$ which places the symmetric matrix $S$ in 
upper triangular form. Since $S$ is symmetric $SM = (M^{\top}S)^{\top}$ is lower triangular. But $(M^{\top}S)M = 
M^{\top}(SM)$ by associativity of matrix multiplication, so row operations and column operations commute. As a result 
the matrix $M^{\top}SM$ is both upper and lower triangular, and thus is diagonal. 

Since $S$ is a rational matrix, all row and column operations are carried out over $\mathbb{Q}$ and the matrix $M^{\top}SM$ 
is also rational. Let $d$ be the common denominator of the matrix entries. Then $(dI)M^{\top}SM(dI)$ is a matrix congruent to $M$ 
with integer entries. It is clear that conjugation by a permutation matrix places the diagonal elements in arbitrary order, and multiplying the $i^{\textrm{th}}$ row and column by $t^{-1}$ eliminates any term $t^{2}$ on the diagonal, so that the diagonal entries can be taken to be 
square-free integers. 
\end{proof} 

\begin{example}\normalfont
Though row reduction is familiar to any student of linear algebra, we illustrate for clarity the process of polarisation of a matrix by simultaneous row and column reduction. Consider the matrix 
\[ S = \begin{bmatrix} 1 & 2 & 3 \\ 2 & 4 & 5 \\ 3 & 5 &-1 \end{bmatrix}\,. \] 
We begin by eliminating the off-diagonal entries in the first row, and then the first column. 
\[ \begin{bmatrix} 1 & 0 & 0 \\ -2 &1 & 0 \\ -3 & 0 &1 \end{bmatrix}
\begin{bmatrix} 1 & 2 & 3 \\ 2 & 4 & 5 \\ 3 & 5 &-1 \end{bmatrix}
\begin{bmatrix} 1 & -2 & -3 \\ 0 & 1 & 0 \\ 0 & 0 &1 \end{bmatrix} = 
\begin{bmatrix} 1 & 0 & 0 \\ 0 & 0 & -1 \\ 0 & -1 &-10 \end{bmatrix}\,.\]
Since we have a zero pivot in position $(2,2)$, we swap the second and third rows. We can also multiply the second row and column by $10$, to achieve the matrix 
 \[\begin{bmatrix} 1 & 0 & 0 \\ 0 & 0 & 1 \\ 0 & 10 &0 \end{bmatrix}
 \begin{bmatrix} 1 & 0 & 0 \\ 0 & 0 & -1 \\ 0 & -1 &-10 \end{bmatrix}
 \begin{bmatrix} 1 & 0 & 0 \\ 0 & 0 & 10 \\ 0 & 1 &0 \end{bmatrix} = \begin{bmatrix} 1 & 0 & 0 \\ 0 & -10 & -10 \\ 0 & -10 &0 \end{bmatrix}\]
 Finally, subtracting the second row from the third and likewise for columns leaves the diagonal matrix $\langle1, -10, 10\rangle$. The matrix $X$ such that $X^{\intercal}SX=\langle 1,-10,10\rangle$ can be computed explicitly by multiplying out the row operation matrices. 
\end{example}

 Our convention is that the quadratic form represented by the diagonal matrix with entries $a_{1}, a_{2}, \ldots, a_{n}$ is denoted $\langle a_{1}, a_{2}, \ldots, a_{n}\rangle$. As with row operations, the above process is not canonical: it involves choice. Unlike the reduced row echelon form, the result depends on the choices made. For example, 
 \[ \begin{bmatrix} 1 & 0 & 0 \\ 0 & 1 & 3 \\ 0 & -3 & 1 \end{bmatrix} 
 \begin{bmatrix} 1 & 0 & 0 \\ 0 & -1 & 0 \\ 0 & 0 & 1 \end{bmatrix} 
 \begin{bmatrix} 1 & 0 & 0 \\ 0 & 1 & 3 \\ 0 & -3 & 1 \end{bmatrix} = 
 \begin{bmatrix} 1 & 0 & 0 \\ 0 & -10 & 0 \\ 0 & 0 & 10 \end{bmatrix}\,, \] 
 so the matrix $S$ of the example above also represents the quadratic form $\langle 1, -1, 1\rangle$. Note that 
 neither $\langle 1, -10, 10\rangle$ nor $\langle 1, -1, 1\rangle$ give the eigenvalues of $S$, which has characteristic 
 polynomial $\lambda^{4} - 4\lambda^{2} - 39\lambda + 1$, an irreducible cubic with roots at approximately 
 $8.5, -4.5, 0.02$. All matrices representing this form will have two positive and one negative eigenvalue; the number of 
 positive, negative, and zero eigenvalues is called the \textit{signature} of the form.  
 
\begin{remark} 
Over $\mathbb{R}$, any positive definite form can be reduced to diagonal form by Proposition \ref{RowOps}. 
Since every positive real number has a square root, all coefficients can be taken to be $1$. Hence, an inner product 
space over $\mathbb{R}$ carries the standard quadratic form, and one is justified in speaking of \textbf{the} 
$n$-dimensional inner product space over $\mathbb{R}$. As the set of positive non-squares in $\mathbb{Q}$ is richer, so is the theory of rational quadratic forms.
\end{remark} 
 
The theory of quadratic forms is to a large extent the resolution of quadratic forms, represented as diagonal matrices, into equivalence classes. In the next subsection, we completely classify quadratic forms over $\mathbb{Q}$ in dimension $2$ by elementary means. 

\subsection{Classification of quadratic forms over $\mathbb{Q}$ in dimension $2$} 

Clearly, one-dimensional forms $\langle a\rangle$ and $\langle b \rangle$ are similar if and only if $ab$ is a square. 
In particular, the forms equivalent to $\langle 1\rangle$ are precisely those given by rational squares.
The first interesting case is the classification of quadratic forms in dimension $2$. We will describe all forms equivalent to $\langle 1,1\rangle$. 

\begin{proposition} \label{Fermat}
For prime $p$, the quadratic forms $\langle p,p\rangle$ and $\langle 1,1\rangle$ 
are similar if and only if $p$ is a sum of two squares. 
\end{proposition}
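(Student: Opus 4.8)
The plan is to prove the two implications separately: the ``if'' direction by exhibiting an explicit congruence, and the ``only if'' direction by combining the invariance of represented values under congruence with a classical number-theoretic descent.

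For the ``if'' direction, suppose $p = a^{2}+b^{2}$ with $a,b \in \mathbb{Z}$. I would write down the congruence directly: set $A = \begin{bmatrix} a & -b \\ b & a \end{bmatrix}$ and compute $A^{\top}A = (a^{2}+b^{2})I_{2} = pI_{2}$. Since $\det A = a^{2}+b^{2} = p \neq 0$, the matrix $A$ is invertible over $\mathbb{Q}$, so the identity $A^{\top}I_{2}A = pI_{2}$ exhibits $\langle 1,1\rangle$ and $\langle p,p\rangle$ as congruent, hence similar. (This $A$ is just the matrix of multiplication by $a+bi$ on $\mathbb{R}^{2}\cong\mathbb{C}$, so the computation is nothing more than multiplicativity of the Gaussian norm.)

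For the ``only if'' direction I first record that congruent matrices represent the same values: if $A^{\top}Q_{1}A = Q_{2}$ with $A$ invertible, then $Q_{2}(w) = Q_{1}(Aw)$, and $Aw$ ranges over the whole space. The form $\langle p,p\rangle$ represents $p$ at the vector $(1,0)$, so if $\langle p,p\rangle \cong \langle 1,1\rangle$ then $\langle 1,1\rangle$ represents $p$ as well; that is, there exist $x,y \in \mathbb{Q}$ with $x^{2}+y^{2}=p$. Clearing denominators and cancelling any common factor produces integers $a,b,n$ with $a^{2}+b^{2}=pn^{2}$ and $\gcd(a,b,n)=1$. Reducing modulo $p$ gives $a^{2}+b^{2}\equiv 0 \pmod p$; coprimality rules out $p\mid a$ (otherwise $p\mid b$, whence $p^{2}\mid pn^{2}$ forces $p\mid n$), so $a$ is invertible mod $p$ and $(ba^{-1})^{2}\equiv -1 \pmod p$. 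Thus $-1$ is a quadratic residue modulo $p$, which forces $p=2$ or $p\equiv 1 \pmod 4$, and Fermat's two-square theorem then supplies integers $s,t$ with $p=s^{2}+t^{2}$.

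I expect the genuine obstacle to be exactly this last passage: moving from a rational solution (equivalently, from $-1$ being a square mod $p$) to an honest integer representation $p=s^{2}+t^{2}$. One may either invoke Fermat's classical theorem as a black box, which is natural given the name of the proposition, or keep the argument self-contained by an explicit descent on $n$: among integer triples with $a^{2}+b^{2}=pn^{2}$ choose one with $n$ minimal, round $a,b$ to the nearest multiples of $n$ so that the rounding error $r_{1}^{2}+r_{2}^{2}\le n^{2}/2 < n^{2}$, and use the Gaussian-norm identity to manufacture a representation of $p$ with a strictly smaller denominator, forcing $n=1$. The remaining pieces --- the matrix identity and the modular bookkeeping --- are routine.
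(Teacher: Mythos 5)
Your proposal is correct and follows essentially the same route as the paper: the identical explicit congruence $A^{\top}I_{2}A = pI_{2}$ with $A = \begin{bmatrix} a & -b \\ b & a \end{bmatrix}$ for the ``if'' direction, and for the ``only if'' direction the extraction of a rational solution of $x^{2}+y^{2}=p$ (the paper does this by equating top-left entries of the matrix equation, which is the same as your representation argument), clearing denominators, and an appeal to Fermat's two-square theorem for the integer conclusion. The only variation is in the modular bookkeeping: the paper rules out $p \equiv 3 \pmod 4$ by a mod-$4$ analysis of $A^{2}+B^{2}=pd^{2}$ with $A,B$ not both even, whereas you deduce that $-1$ is a quadratic residue mod $p$ from $\gcd(a,b,n)=1$ --- both are valid and of equal difficulty.
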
 

\begin{proof} 
Suppose that $p = a^{2} + b^{2}$. Then 
\[ \begin{bmatrix} a & b \\ -b & a \end{bmatrix} \begin{bmatrix} 1 & 0 \\ 0 & 1 \end{bmatrix} 
\begin{bmatrix} a & -b \\ b & a \end{bmatrix} = \begin{bmatrix} p & 0 \\ 0 & p \end{bmatrix}\,. \] 

Conversely, suppose that $\langle p,p \rangle$ is a quadratic form equivalent to $\langle 1, 1 \rangle$. Then the equation 
\[ \begin{bmatrix} a & b \\ a' & b' \end{bmatrix}\begin{bmatrix} a & a' \\ b & b' \end{bmatrix} = \begin{bmatrix} p & 0 \\ 0 & p \end{bmatrix}\]
has a solution for a rational valued matrix. Equating the top-left entries, we find that $a^{2} + b^{2} = p$. 
Set $a = A/d$ and $b = B/d$ for integers $A$ and $B$, not both even. This yields an integer equation $A^{2} + B^{2} = pd^{2}$. The left hand side evaluates to $1$ or $2$ modulo $4$. Hence $d$ is odd, and $p$ is not $3 \mod 4$. 
\end{proof} 

The following classical result of Fermat classifies the primes arising in Proposition \ref{Fermat} (see, for example, Chapter 8 of Ireland and Rosen, \cite{IrelandRosen}).

\begin{theorem} \label{FermatThm}
A prime is a sum of two (integer) squares if and only if $p = 2$ or $p \equiv 1 \mod 4$. 
\end{theorem}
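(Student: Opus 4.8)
The plan is to prove the two directions separately; the forward implication is essentially already contained in the proof of Proposition~\ref{Fermat}. If $p = a^{2}+b^{2}$ with $p$ an odd prime, then reducing modulo $4$ and using that integer squares are congruent to $0$ or $1$ forces $p \equiv 1 \pmod 4$, while the prime $2 = 1^{2}+1^{2}$ is handled directly. This half requires no new ideas beyond the observation already made there.

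The substance lies in the converse: every prime $p \equiv 1 \pmod 4$ is a sum of two squares. The key arithmetic input is that $-1$ is a quadratic residue modulo such a $p$, i.e.\ there exists an integer $m$ with $m^{2} \equiv -1 \pmod p$. I would establish this from Wilson's theorem $(p-1)! \equiv -1 \pmod p$: taking $m = \left(\tfrac{p-1}{2}\right)!$ and pairing each factor $j$ with $p-j$ shows $m^{2} \equiv (-1)^{(p-1)/2}(p-1)! \pmod p$, which equals $-1$ precisely when $(p-1)/2$ is even, that is, when $p \equiv 1 \pmod 4$. (Euler's criterion gives the same conclusion.)

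Given such an $m$, the second step is a pigeonhole (Thue's lemma) descent. Consider the $\left(\lfloor\sqrt p\rfloor + 1\right)^{2} > p$ integer pairs $(a,b)$ with $0 \le a,b \le \lfloor\sqrt p\rfloor$, together with the map $(a,b) \mapsto a - mb \bmod p$. Since the domain is larger than the codomain, two distinct pairs collide, and their difference $(x,y)$ satisfies $x \equiv my \pmod p$ with $(x,y)\neq(0,0)$ and $|x|,|y| \le \lfloor\sqrt p\rfloor < \sqrt p$. Then $x^{2}+y^{2} \equiv (m^{2}+1)y^{2} \equiv 0 \pmod p$, while $0 < x^{2}+y^{2} < 2p$, forcing $x^{2}+y^{2} = p$.

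The main obstacle is the converse direction, and within it the production of a square root of $-1$ modulo $p$; once $m$ is in hand, the pigeonhole argument is routine. I note that one could instead invoke unique factorisation in the Gaussian integers $\mathbb{Z}[i]$, writing $p = \pi\bar\pi$ with $\pi = a+bi$ so that $p = a^{2}+b^{2}$, but this trades elementary self-containedness for a prerequisite on $\mathbb{Z}[i]$, so I would prefer the descent above.
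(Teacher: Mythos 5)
Your proposal is correct, but it does something the paper deliberately does not do: the paper offers no proof of Theorem~\ref{FermatThm} at all, simply citing it as a classical result of Fermat with a pointer to Chapter~8 of Ireland and Rosen \cite{IrelandRosen} (where the result is obtained by machinery such as Gaussian integers or Jacobi sums). Your argument is therefore a genuinely different route, and a fully elementary, self-contained one. The easy direction (reduction modulo $4$, with $2 = 1^{2}+1^{2}$ handled separately) matches the observation already embedded in the paper's proof of Proposition~\ref{Fermat}. For the converse, your two-step argument is sound: Wilson's theorem with $m = \left(\tfrac{p-1}{2}\right)!$ and the pairing $j \leftrightarrow p-j$ correctly produces $m^{2} \equiv -1 \pmod p$ when $p \equiv 1 \pmod 4$; and the Thue-type pigeonhole on the $\left(\lfloor\sqrt p\rfloor + 1\right)^{2} > p$ pairs yields $(x,y) \neq (0,0)$ with $x \equiv my \pmod p$ and $x^{2}+y^{2} \equiv 0 \pmod p$, where $0 < x^{2}+y^{2} < 2p$ forces $x^{2}+y^{2} = p$ (the strict inequality $\lfloor\sqrt p\rfloor < \sqrt p$ holds because a prime is not a perfect square, a point worth stating explicitly). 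What your approach buys is consistency with the paper's stated aim of being entirely self-contained and elementary --- this theorem is in fact the one ingredient the paper outsources --- at the cost of about a page of material that is tangential to the paper's main thread, which is presumably why the authors chose to cite rather than prove it.
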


The integers which are a sum of two squares may be characterised precisely. 
The Brahmagupta-Fibonacci identity, 
\begin{equation} (a^{2} + b^{2})(c^{2} + d^{2}) = (ac-bd)^{2} + (ad+bc)^{2}\,, \end{equation}
shows that the product of sums of two squares is again a sum of two squares. 
Write $n = mr^{2}$, where $m$ is square-free. Clearly, $r^{2} +0$ is a sum of two squares, 
while by Theorem \ref{FermatThm} and Brahmagupta-Fibonacci, $m$ (and hence $n$) 
may be written as a sum of two squares if its prime factors are $2$ or congruent to $1 \mod 4$. In fact, this condition 
is necessary and sufficient. That is, $n$ is a sum of two squares if and only if no prime congruent to $3 \mod 4$ divides its square-free part, \cite[Theorem 366]{HardyWright}. From this discussion, it is not difficult to describe all quadratic forms in dimension two which are equivalent to $I_{2}$. 

\begin{corollary}
In dimension $2$ a quadratic form is congruent to $\langle 1,1\rangle$ if and only if it may be written as 
$\langle a, x^{2}a\rangle$ where $a$ is a sum of two non-zero squares and $x$ is a non-zero integer. 
\end{corollary}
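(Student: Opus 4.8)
The plan is to prove both implications by leaning on the two tools already in place: the explicit change of basis behind Proposition~\ref{Fermat} for the ``if'' direction, and the square-free diagonalisation of Proposition~\ref{RowOps} together with the integral characterisation of sums of two squares for the ``only if'' direction. The key observation throughout is that the construction in Proposition~\ref{Fermat} never used the primality of $p$, only that $p$ is a sum of two squares.

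For the sufficiency, suppose the form is $\langle a, x^{2}a\rangle$ with $a = c^{2}+d^{2}$ for non-zero integers $c,d$ and $x$ a non-zero integer. First I would note that $\langle a, x^{2}a\rangle \cong \langle a, a\rangle$: scaling the second basis vector by $x$ is an invertible congruence replacing the entry $x^{2}a$ by $a$, since the one-dimensional forms $\langle x^{2}a\rangle$ and $\langle a\rangle$ differ by the square $x^{2}$. It then suffices to show $\langle a,a\rangle \cong \langle 1,1\rangle$, and here the identity of Proposition~\ref{Fermat} applies verbatim: the congruence with transition matrix $\begin{bmatrix} c & -d \\ d & c \end{bmatrix}$ carries $I_{2}$ to $\langle a, a\rangle$, and this matrix is invertible because its determinant equals $a > 0$.

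For the necessity, suppose a form $Q$ is congruent to $\langle 1,1\rangle$. By Proposition~\ref{RowOps} I may diagonalise $Q$ as $\langle c, d\rangle$ with $c,d$ square-free positive integers (positivity from positive-definiteness). Since a congruence $A^{\top}\langle 1,1\rangle A = \langle c,d\rangle$ forces $cd = (\det A)^{2}$, the product $cd$ is a perfect square; as $c$ and $d$ are square-free this gives $c = d =: m$. Reading off the top-left entry of the matrix equation $A^{\top}A = m I_{2}$, exactly as in the converse half of Proposition~\ref{Fermat}, yields $m = \alpha^{2}+\beta^{2}$ for rationals $\alpha,\beta$. Clearing denominators gives $m e^{2} = r^{2}+s^{2}$ in integers, so the square-free part of the right-hand side contains no prime $\equiv 3 \pmod 4$; but that square-free part is exactly $m$, whence by the characterisation recorded after Theorem~\ref{FermatThm}, $m$ is a sum of two integer squares. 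Writing $Q \cong \langle m, m\rangle = \langle m, 1^{2}m\rangle$ then exhibits the shape $\langle a, x^{2}a\rangle$ with $a = m$ and $x = 1$.

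The step I expect to require the most care is the qualifier ``non-zero'' in ``sum of two non-zero squares''. When $m > 1$ it is square-free, hence not a perfect square, so in any integer representation $m = u^{2}+v^{2}$ both summands are non-zero and we are finished. The sole exceptional case is $m = 1$, which must be handled separately: here $Q \cong \langle 1,1\rangle \cong \langle 2,2\rangle$ by Proposition~\ref{Fermat} (since $2 = 1^{2}+1^{2}$), and the representation $\langle 2, 1^{2}\cdot 2\rangle$ with $a = 2 = 1^{2}+1^{2}$ and $x = 1$ meets the non-zero requirement. This edge case aside, the argument is entirely mechanical.
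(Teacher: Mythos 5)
Your proof is correct and takes essentially the same route as the paper: the ``if'' direction reuses the congruence matrix of Proposition~\ref{Fermat} (whose construction never needed primality), and the ``only if'' direction clears denominators in a rational sum-of-two-squares representation and invokes the characterisation of integers expressible as a sum of two squares. The paper's own proof is a two-line sketch of exactly this argument; your extra care with the reduction to $\langle m,m\rangle$ via Proposition~\ref{RowOps} and with the $m=1$ edge case merely makes explicit what the paper leaves implicit.
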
 

\begin{proof} 
By the argument of Proposition \ref{Fermat}, the forms $\langle a, x^{2}a\rangle$ and $\langle 1,1\rangle$ 
are similar if and only if $a$ may be written as a sum of two squares. This occurs precisely when the square-free part of $a$ is not divisible by a prime which is $3 \mod 4$.
\end{proof} 

What then may be said of the other equivalence classes of quadratic forms? Suppose that $p$ and $q$ are distinct primes congruent to $3 \mod 4$. By the argument of Proposition \ref{Fermat}, the forms $\langle p,p\rangle$ and $\langle q,q\rangle$ are congruent if and only if there exists an integer solution to 
\[ p(\frac{a^{2} + b^{2}}{d^{2}}) = q\,,\] 
for integers $a,b,d$. Suppose first that $d$ is coprime to $p$, then $p(a^{2} + b^{2}) = qd^{2}$ is an integer equation, and $p$ divides $q$. 
However, this is absurd as $p$ and $q$ are distinct primes. Otherwise, $p^{2}$ divides $d^{2}$, say that $d = pt$. Then, one obtains an integer equation $a^{2} + b^{2} = pqt^{2}$. The square-free part of the right-hand side is divisible by a prime which is $3$ mod $4$, leading to a contradiction. Hence, the forms $\langle p,p\rangle$ for primes $p \equiv 3 \mod 4$ are all inequivalent. Similarly the form $\langle pq, pq\rangle$ is inequivalent to $\langle p,p\rangle$ and $\langle q,q\rangle$. Observe that a quadratic form of discriminant $1$ (i.e., square determinant) in dimension $2$ is necessarily of the form $\langle a,x^{2}a\rangle$ where $a$ and $x$ are integers. We may now classify forms in dimension $2$ of discriminant $1$ over $\mathbb{Q}$.

\begin{theorem} \label{dim2class}
Denote by $r_{3}(n)$ the product of all primes congruent to $3$ modulo $4$ which divide the square-free part of $n$. The quadratic forms $\langle n, x^{2}n\rangle$ and $\langle m, y^{2}m\rangle$ are similar if and only if $r_{3}(n) = r_{3}(m)$. 
\end{theorem}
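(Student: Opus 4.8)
The plan is to reduce the comparison to the forms $\langle n, n\rangle$ and $\langle m, m\rangle$, reinterpret their congruence as a statement about sums of two rational squares exactly as in Proposition \ref{Fermat}, and then translate that statement into the equality $r_{3}(n) = r_{3}(m)$ by bookkeeping on prime valuations.

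First I would strip off the square factors. The diagonal congruence by $\mathrm{diag}(1, x^{-1})$ shows $\langle n, x^{2}n\rangle \cong \langle n, n\rangle$, and likewise $\langle m, y^{2}m\rangle \cong \langle m, m\rangle$ (here $x, y \neq 0$ by non-degeneracy). Hence it suffices to decide when $\langle n, n\rangle$ and $\langle m, m\rangle$ are similar. Under the standing positive-definiteness assumption we have $n, m > 0$, so $m/n > 0$; this also rules out the spurious case of opposite signs, which $r_{3}$ cannot detect.

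Next comes the key reformulation. Since $\langle n, n\rangle$ is represented by $nI_{2}$, a congruence $A^{\top}(nI_{2})A = mI_{2}$ is equivalent to $A^{\top}A = (m/n)I_{2}$. Writing the first column of $A$ as $(a,b)^{\top}$ and reading off the $(1,1)$ entry forces $a^{2} + b^{2} = m/n$, so $m/n$ must be a sum of two rational squares; conversely, if $m/n = a^{2} + b^{2}$ then $A = \begin{bmatrix} a & -b \\ b & a \end{bmatrix}$ satisfies $A^{\top}A = (m/n)I_{2}$ and is invertible since $\det A = m/n \neq 0$, exactly as in the proof of Proposition \ref{Fermat}. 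Thus $\langle n, n\rangle \cong \langle m, m\rangle$ if and only if $m/n$ is a sum of two rational squares.

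Finally I would translate this into a divisibility condition. Extending the integer two-squares theorem to the rationals (a positive rational is a sum of two rational squares if and only if every prime $p \equiv 3 \pmod 4$ occurs in it to an even power, proved by clearing denominators and applying \cite[Theorem 366]{HardyWright}), the quotient $m/n$ is a sum of two rational squares precisely when $v_{p}(m) \equiv v_{p}(n) \pmod 2$ for every such prime $p$. Since a prime divides the square-free part of an integer exactly when its valuation there is odd, this congruence holds for all $p \equiv 3 \pmod 4$ if and only if $m$ and $n$ carry the same primes $\equiv 3 \pmod 4$ in their square-free parts, that is, $r_{3}(m) = r_{3}(n)$. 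The main obstacle is the clean passage from integer to rational sums of two squares together with the valuation bookkeeping; once the congruence is recast as $A^{\top}A = (m/n)I_{2}$, the remainder is routine.
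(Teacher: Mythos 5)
Your proof is correct, but it is organised differently from the paper's. After the common first step (the congruence by $\mathrm{diag}(1,x^{-1})$ reducing to $\langle n,n\rangle$ versus $\langle m,m\rangle$), the paper splits into two separate arguments: for sufficiency it writes $n = r_{3}(n)n'$ with $n'$ a sum of two integer squares and exhibits an explicit congruence carrying $\langle r_{3}(n), r_{3}(n)\rangle$ to $\langle n,n\rangle$, so that both forms are congruent to the common canonical representative $\langle r_{3}(n), r_{3}(n)\rangle$ and similarity follows by transitivity; for necessity it derives a rational equation of the shape $(a^{2}+b^{2})n = m$, clears denominators, and reaches a contradiction by comparing the parity of the power of $p$ dividing each side. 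You instead prove a single equivalence doing both jobs at once: since $nI_{2}$ is scalar, congruence of the two forms is literally the statement $A^{\top}A = (m/n)I_{2}$, which (by the argument of Proposition \ref{Fermat}, run over $\mathbb{Q}$ rather than $\mathbb{Z}$) holds if and only if $m/n$ is a sum of two rational squares; the rational extension of the two-squares theorem, obtained by clearing denominators and applying \cite[Theorem 366]{HardyWright} in both directions, then converts this into the valuation condition $v_{p}(m)\equiv v_{p}(n) \pmod 2$ for all $p \equiv 3 \pmod 4$, i.e.\ $r_{3}(n)=r_{3}(m)$. The trade-off: the paper's route is constructive over the integers and makes the canonical representative $\langle r_{3}(n), r_{3}(n)\rangle$ of each similarity class visible (which the surrounding discussion reuses), while yours concentrates all the number theory into one clean statement about rational sums of two squares and handles both implications uniformly. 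Your explicit remark that positivity of $n$ and $m$ is needed (since $r_{3}$ cannot see signs) is a point the paper leaves implicit in its standing positive-definiteness assumption, and is worth making.
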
 

\begin{proof} 
Observe that 
\[ 
\begin{bmatrix} 1 & 0 \\ 0 & x^{-1} \end{bmatrix} 
\begin{bmatrix} n & 0 \\ 0 & x^{2}n \end{bmatrix} 
\begin{bmatrix} 1 & 0 \\ 0 & x^{-1} \end{bmatrix}  =  
\begin{bmatrix} n & 0 \\ 0 & n \end{bmatrix}\,, \] 
so nothing is lost by considering only the forms $\langle n,n\rangle$ and $\langle m,m\rangle$. 

Next, write $n = r_{3}(n) n'$ where $n'$ can be written as a sum of two integer squares, say $n' = a^{2} + b^{2}$. Then 
\[ 
\begin{bmatrix} a & b \\ -b & a \end{bmatrix} 
\begin{bmatrix} r_{3}(n) & 0 \\ 0 & r_{3}(n) \end{bmatrix} 
\begin{bmatrix} a & b \\ b & -a \end{bmatrix}  =  
\begin{bmatrix} n & 0 \\ 0 & n \end{bmatrix}\,,\] 
so $\langle n,n\rangle$ is similar to $\langle r_{3}(n), r_{3}(n)\rangle$. Since similarity in an equivalence relation, it follows that when $r_{3}(n) = r_{3}(m)$ the forms $\langle n,n\rangle$ and $\langle m,m\rangle$ are similar. 

Finally, suppose that $r_{3}(n) \neq r_{3}(m)$. Then without loss of generality there exists a prime $p \equiv 3 \mod 4$ such that $p \mid n$ and $p \nmid m$. If the forms were similar we would obtain a rational equation 
\[ (a^{2} + b^{2}x^{2})n = m\] 
Multiplying through by the square of the common denominator of $a$ and $b$ leaves an integer equation where the highest power of $p$ dividing the left hand side is odd. In contrast, the highest power of $p$ dividing the right hand side is even. This is a contradiction, and hence $\langle n,n\rangle$ and $\langle m,m\rangle$ cannot be similar. 
\end{proof} 

\subsection{Legendre and Hilbert symbols}

The standard treatments of quadratic forms adopt the language of Hilbert symbols for the discussion of quadratic forms in higher dimensions. Such symbols can be easily manipulated algebraically, while capturing the classification of Proposition \ref{dim2class} precisely. 

\begin{definition} 
For prime number $p$ and $a$ coprime to $p$, the \textit{Legendre symbol} $\left( \frac{a}{p} \right)$ is defined to be $1$ if $x^{2} \equiv a \mod p$ has a solution, and $-1$ otherwise. 
\end{definition} 

It is often convenient to set $\left( \frac{0}{p}\right) = 0$, though we will not require this convention. Many authors say that $a$ is a \textit{quadratic residue modulo} $p$ if $\left( \frac{a}{p} \right) = 1$, and a \textit{quadratic non-residue} otherwise. It follows directly from the definition of the Legendre symbol that $\left( \frac{a^{2}}{p} \right) = 1$ and that $\left( \frac{a}{p} \right) = \left( \frac{b}{p} \right)$ where $a \equiv b \mod p$. It is easily established that the Legendre symbol is multiplicative in the sense that $\left( \frac{ab}{p} \right) = \left( \frac{a}{p} \right) \left( \frac{b}{p} \right)$ which reduces the evaluation to prime arguments. Gauss' celebrated law of quadratic reciprocity gives an efficient reduction for evaluation of such symbols.

\begin{theorem}[Gauss, cf. Chapter 5, \cite{IrelandRosen}]
Let $p$ and $q$ be odd primes. Then 
\[ \left( \frac{p}{q} \right)\left( \frac{q}{p} \right) = (-1)^{\frac{p-1}{2}{\frac{q-1}{2}}}\,.\]
The symbol $\left( \frac{-1}{q} \right)$ evaluates to $1$ if $q \equiv 1 \mod 4$ and $-1$ if $q\equiv 3 \mod 4$. 
The symbol $ \left( \frac{2}{q} \right)$ evaluates to $1$ if $q \equiv \pm 1 \mod 8$ and $-1$ if $q \equiv \pm 3 \mod 8$. 
\end{theorem}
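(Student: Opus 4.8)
The plan is to reduce all three assertions to two elementary tools, \emph{Euler's criterion} and \emph{Gauss's Lemma}, after which the supplementary laws follow by direct computation and the main reciprocity law follows from a lattice-point count. I would begin by recalling that $(\mathbb{Z}/p\mathbb{Z})^{\times}$ is cyclic of order $p-1$; the squares form the unique subgroup of index $2$, so $a$ coprime to $p$ is a quadratic residue precisely when $a^{(p-1)/2}\equiv 1\pmod p$. Since $a^{(p-1)/2}$ squares to $1$ it equals $\pm 1$, giving Euler's criterion $\left(\frac{a}{p}\right)\equiv a^{(p-1)/2}\pmod p$. Applying this to $a=-1$ immediately yields $\left(\frac{-1}{p}\right)=(-1)^{(p-1)/2}$, which is the stated supplement for $-1$.

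For the remaining claims I would establish Gauss's Lemma. Fix $a$ coprime to $p$ and reduce the products $a, 2a, \ldots, \frac{p-1}{2}a$ to their representatives in the symmetric range $\left(-\frac{p}{2},\frac{p}{2}\right)$, letting $\mu$ be the number of these that are negative. The absolute values of these residues permute $1,2,\ldots,\frac{p-1}{2}$: no two can agree or be negatives of one another, since that would force $p$ to divide $(i\pm j)a$ with $0<i\pm j<p$. Multiplying all $\frac{p-1}{2}$ congruences together gives $a^{(p-1)/2}\left(\frac{p-1}{2}\right)!\equiv(-1)^{\mu}\left(\frac{p-1}{2}\right)!\pmod p$; cancelling the factorial and invoking Euler's criterion yields $\left(\frac{a}{p}\right)=(-1)^{\mu}$. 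Taking $a=2$, no reduction is needed and the symmetric residue of $2j$ is negative exactly when $j>p/4$, so $\mu=\frac{p-1}{2}-\lfloor p/4\rfloor$; a short case analysis modulo $8$ then produces the stated values of $\left(\frac{2}{p}\right)$.

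For the main law I would restrict Gauss's Lemma to an odd argument and convert $\mu\bmod 2$ into a floor sum. Writing each multiple as $ja=p\lfloor ja/p\rfloor+s_j$ with $0<s_j<p$, summing over $j$, and comparing with the permutation identity above, a parity computation (using that $a$ is odd and $p$ is odd) gives
\[ \mu\equiv\sum_{j=1}^{(p-1)/2}\left\lfloor \frac{ja}{p}\right\rfloor \pmod 2 . \]
Specialising to $a=q$ and $a=p$ and multiplying the two resulting instances of Gauss's Lemma, the exponent of $-1$ becomes $\sum_{j=1}^{(p-1)/2}\lfloor jq/p\rfloor+\sum_{i=1}^{(q-1)/2}\lfloor ip/q\rfloor$.

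The crux, and the step I expect to be the main obstacle, is the identity asserting that this last sum equals $\frac{p-1}{2}\cdot\frac{q-1}{2}$. I would prove it geometrically: the first sum counts the lattice points with positive coordinates lying strictly below the diagonal $py=qx$ of the rectangle $\left[0,\frac{p}{2}\right]\times\left[0,\frac{q}{2}\right]$, and the second counts those strictly above it. Because $p$ and $q$ are distinct primes, no interior lattice point lies on the diagonal, so the two counts partition the $\frac{p-1}{2}\cdot\frac{q-1}{2}$ interior lattice points of the rectangle. This gives $\left(\frac{p}{q}\right)\left(\frac{q}{p}\right)=(-1)^{\frac{p-1}{2}\cdot\frac{q-1}{2}}$, completing the proof. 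This lattice-counting argument is precisely the step where the genuine content of reciprocity resides, and it is the only part that is not a purely formal manipulation.
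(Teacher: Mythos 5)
The paper does not prove this theorem at all: it is stated as classical background, attributed to Gauss with a pointer to Chapter 5 of Ireland and Rosen, and is then used purely as a black box for evaluating Legendre and Hilbert symbols in the later sections. So there is no in-paper argument to compare against; judged on its own terms, your proof is correct and complete in outline. You follow the classical route: Euler's criterion from the cyclicity of $(\mathbb{Z}/p\mathbb{Z})^{\times}$, giving the supplement for $-1$; Gauss's Lemma, giving the supplement for $2$ via $\mu=\frac{p-1}{2}-\lfloor p/4\rfloor$ and a case check modulo $8$; the parity identity $\mu\equiv\sum_{j=1}^{(p-1)/2}\lfloor ja/p\rfloor \pmod 2$ for odd $a$; and Eisenstein's lattice-point count in the rectangle $\left[0,\frac{p}{2}\right]\times\left[0,\frac{q}{2}\right]$. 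All of these steps are sound, and you correctly identify the partition of the $\frac{p-1}{2}\cdot\frac{q-1}{2}$ interior lattice points (using that none lie on the diagonal) as the step carrying the real content. Two minor remarks: the reciprocity law implicitly requires $p\neq q$, since otherwise $\left(\frac{p}{q}\right)$ is undefined under the paper's own definition of the Legendre symbol, and distinctness is exactly the hypothesis your diagonal argument invokes; and in the parity step you should make explicit that comparing $\sum_j ja$ with $\sum_{k=1}^{(p-1)/2}k$ uses the permutation property from Gauss's Lemma --- routine, but worth a line. Your treatment is genuinely more self-contained than the paper's deliberate deferral to the literature; the cost is a page or two of elementary number theory that the paper chose to omit, consistent with its stated aim of keeping the exposition focused on quadratic-form invariants rather than on reciprocity itself.
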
 
For the reader unfamiliar with Legendre symbols, we provide sample computations below.
\begin{example}
The symbol $\left( \frac{4}{p} \right)$ evaluates to $1$ for every prime since $4$ is a square in the integers. 
The symbol $\left( \frac{2}{7} \right)$ evaluates to $1$ because $3^{2} \equiv 2 \mod 7$. The symbol 
 $\left( \frac{2}{11} \right) = -1$ because $2$ is not a quadratic residue. (This may be verified exhaustively. 
 Alternatively, observe that $2 \equiv -9 \mod 11$, then use that the negative of a residue is a non-residue when $p \equiv 3 \mod 4$.)

Larger examples are evaluated by repeatedly flipping the terms in the symbol, factoring and evaluating `easy' terms. 

\[  \left( \frac{31}{103} \right) = (-1)  \left( \frac{103}{31} \right) =  (-1)\left( \frac{10}{31} \right) = \]
\[(-1)\left( \frac{2}{31} \right) \left( \frac{5}{31} \right) =  (-1)\left( \frac{2}{31} \right) \left( \frac{31}{5} \right) = (-1)\left( \frac{2}{31} \right) \left( \frac{1}{5} \right) = -1\]   

In the last step, we used that the remaining Legendre symbols evaluate to $1$. (Note that $8^{2} \equiv 2 \mod 31$.)
\[ \left( \frac{29}{151} \right) = \left( \frac{151}{29} \right) = \left( \frac{6}{29} \right) = \left( \frac{2}{29} \right)\left( \frac{3}{29} \right) = \left( \frac{2}{29} \right)\left( \frac{29}{3} \right) = \left( \frac{2}{29} \right)\left( \frac{2}{3} \right) = 1\]
We can verify with a computer that $28^{2} \equiv 29 \mod 151$ so that the Legendre symbol is indeed correct. 
\end{example} 

Next, we introduce the Hilbert symbol, which was developed precisely to characterise equivalence of quadratic forms. 

\begin{definition} 
Define the \textit{Hilbert symbol} $(a,b)_{p}$ for non-zero integers $a,b$ and prime $p$ to be $1$ if the equation 
\[ ax^{2} + by^{2} = z^2 \] 
has a non-trivial solution in the $p$-adic numbers $\mathbb{Q}_{p}$ and $-1$ otherwise.  
\end{definition} 

The definition mentions the $p$-adic numbers, which properly form the \textit{local} part of the \textit{local-global} 
theory of quadratic forms. They will not be necessary for our purposes: the value of the Hilbert symbol can 
always be determined in terms of Legendre symbols. Note that the integers form a subring of $\mathbb{Q}_{p}$ 
for all primes $p$, so an integer solution of Hilbert's equation means that the corresponding Hilbert symbols 
evaluate to $1$ for any prime $p$. The following identities for the Hilbert symbol are immediate from the definition: 
\[ (a, b)_{p} = (b, a)_{p}, \,\,\, (a^{2}, b)_{p} = 1, \,\,\, (a,-a)_p=1, \,\,\, (a,(1-a))_p=1\] 
the first identity by swapping variables $x$ and $y$, the second by setting $x = a^{-1}$ and $y = 0, z=1$, for the third $x=y, z=0$ and for the fourth $x=y=z$. 

A proof for the following useful identity may be found in Chapter 3 of \cite{Serre}  
\[ (a,b)_p = -1^{\alpha \beta \epsilon(p)} \left( \frac{u}{p}\right)^\beta \left( \frac{v}{p}\right)^\alpha, \qquad \text{for any odd prime } p,\]
where $a=p^\alpha u, b=p^\beta v$ with $\gcd(uv,p) = 1$ and $\epsilon(p)=\displaystyle\frac{p-1}{2}$. 

The following properties of the Hilbert symbol may be deduced from the displayed equation, and are worth stating directly: 
\begin{enumerate} 
\item $(a_{1}a_{2}, b)_{p} = (a_{1}, b)_{p}(a_{2}, b)_{p}$, the Hilbert symbol is \textit{bilinear}.
\item $(a,b)_p=(a,-ab)_p=(a,(1-a)b)_p$. 
\item If $a$ and $b$ are both coprime to $p$ then $(a,b)_{p} = 1$. 
\item $(a, p)_{p} = \left( \frac{a}{p} \right)$, this case of the Hilbert symbol reduces to the Legendre symbol.
\item $(p,p)_{p} = \left(\frac{-1}{p}\right)$ so $(p,p)_{p} = 1$ if $p \equiv 1 \mod 4$ and $(p,p)_{p} = -1$ if $p \equiv 3 \mod 4$. 
\end{enumerate} 

\begin{example} 
To illustrate the evaluation of the Hilbert symbol, we compute $(21, 33)_{3}$. 
By bilinearity, this the symbol splits into prime factors. By the remaining properties of the Hilbert symbol, 
each term is either trivial or evaluated in terms of a Legendre symbol as follows: 
\[ (3,3)_{3}(3, 11)_{3}(7, 3)_{3}(7, 11)_{3} =  \left( \frac{-1}{3} \right) \left( \frac{11}{3} \right) \left( \frac{7}{3} \right) = 1\,.\] 
\end{example}

\begin{remark} 
In this paper, we restrict attention almost entirely to odd primes. 
The rules for manipulating $(a, b)_{2}$ are slightly more complicated than 
those for odd primes, but are given in any specialist text on quadratic forms. 
We also overlook the so-called infinite prime (called the prime $-1$ by Conway) 
which relates to the solvability of the equation $ax^{2} + by^{2} = 1$ in the real numbers, 
so $(a, b)_{\infty} = 1$ provided at least one of $a$ and $b$ is positive. Since we deal only 
with positive definite matrices in this paper, the Hilbert symbol at infinity is always $1$.

Artin's \textit{Global Product Formula} for Hilbert Symbols states that all-but-one of the Hilbert symbols 
determines the last one, where we quantify over primes. Hence, if a (positive definite) quadratic form differs from 
$I_{n}$ at $p = 2$, it also differs at an odd prime, and this fact can be detected there. So computing 
Hilbert symbols at odd primes suffices for our purposes. 
\end{remark} 

Let us conclude this section with an explicit demonstration that Hilbert symbols 
characterise the equivalence of quadratic forms in two dimensions.

\begin{theorem}[cf. Theorem \ref{dim2class}] \label{HMdim2}
Let $m,n,x,y$ be positive integers. Quadratic forms $\langle n,nx^{2}\rangle$ and 
$\langle m,my^{2}\rangle$ are similar if and only if 
$(n,nx^{2})_{p} = (m,mx^{2})_{p}$ for every odd prime.
\end{theorem}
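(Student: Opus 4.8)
The plan is to reduce both sides to the classification already obtained in Theorem~\ref{dim2class}, by showing that the Hilbert symbols $(n, nx^2)_p$, ranging over odd primes $p$, encode precisely the invariant $r_3(n)$. First I would strip away the square factors. Since the Hilbert symbol is bilinear and $(a^2, b)_p = 1$, I have $(n, nx^2)_p = (n, n)_p (n, x^2)_p = (n, n)_p$, so the factor $x^2$ is immaterial (and likewise $y^2$ on the other side); this reduces the claim to the forms $\langle n, n\rangle$ and $\langle m, m\rangle$. Next, using the identity $(a, -a)_p = 1$ together with bilinearity, I would write $(n, n)_p = (n, -1)_p (n, -n)_p = (n, -1)_p$, so that everything comes down to evaluating the single symbol $(n, -1)_p$.

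The heart of the computation is the displayed formula expressing $(a,b)_p$ in terms of Legendre symbols. Taking $a = n = p^{\alpha} u$ and $b = -1$ (so that $\beta = 0$ and $v = -1$, with $\gcd(u, p) = 1$), all but one factor collapses and I obtain $(n, -1)_p = \left(\frac{-1}{p}\right)^{\alpha}$, where $\alpha = v_p(n)$ is the exponent of $p$ in $n$. Since $\left(\frac{-1}{p}\right)$ equals $1$ for $p \equiv 1 \mod 4$ and $-1$ for $p \equiv 3 \mod 4$, the symbol $(n, -1)_p$ is $-1$ exactly when $p \equiv 3 \mod 4$ and $v_p(n)$ is odd. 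But $v_p(n)$ is odd precisely when $p$ divides the square-free part of $n$, so the odd primes at which $(n, -1)_p = -1$ are exactly the prime divisors of $r_3(n)$.

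Finally I would assemble the equivalence. The integer $r_3(n)$ is square-free, hence determined by its set of prime divisors; by the previous paragraph that set is recovered from the symbols $(n, -1)_p$. Therefore $(n, nx^2)_p = (m, my^2)_p$ for all odd primes $p$ holds if and only if $r_3(n) = r_3(m)$, which by Theorem~\ref{dim2class} is equivalent to the similarity of the two forms. The only point demanding care is the bookkeeping in the reduction and the correct reading of the Legendre-symbol formula at $b = -1$; once those are in place the argument is purely formal, with Theorem~\ref{dim2class} supplying all the arithmetic content.
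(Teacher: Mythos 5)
Your proof is correct and takes essentially the same approach as the paper: both strip the square factors by bilinearity to reduce to $(n,n)_p$, show that this symbol equals $-1$ exactly when $p \equiv 3 \pmod 4$ divides the square-free part of $n$ (so the odd-prime Hilbert symbols recover $r_3(n)$), and then invoke Theorem~\ref{dim2class} to convert equality of invariants into similarity. The only cosmetic difference is the evaluation of $(n,n)_p$: you use $(n,n)_p = (n,-1)_p$ together with the Legendre-symbol formula, whereas the paper writes $n = a^2 n' p^t$ and reduces to $(p^t,p^t)_p$.
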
 

\begin{proof} 
First, by elementary properties of the Hilbert symbol 
\[ (n,nx^{2})_{p} = (n,n)_{p} (n, x^{2})_{p} = (n,n)_{p} \,.\] 
Write $n = a^{2}n' p^{t}$ where $n'$ is square-free and coprime to $p$, and $t \in \{0,1\}$ (note that $p$ may divide $a$). 
Then $(n,n)_{p} = (p^{t}, p^{t})_{p}$. This symbol evaluates to $-1$ if and only if $t = 1$ and $p \equiv 3 \mod 4$. 
So, the Hilbert symbol detects the primes congruent to $3$ modulo $4$ which divide the square-free 
part of $n$. The proof of Theorem \ref{dim2class} provides an explicit demonstration that the quadratic forms are 
similar when the Hilbert symbols agree.
\end{proof} 

Observe that Theorem \ref{HMdim2} is precisely equivalent to Theorem \ref{dim2class}; the rules for manipulating the Hilbert symbol allow for some simplifications of the proof. Properties of the Hilbert symbol will be essential to compute invariants in higher dimensions.

Let us conclude this section with a summary of the discussion up to this point: 
\begin{enumerate} 
\item In dimension $2$, a quadratic form of discriminant $1$ is necessarily of the form $\langle a, x^{2}a\rangle$. 
Every such form is equivalent to $\langle m,m\rangle$, for square-free integer $m$.
\item Two such forms are equivalent if and only if the set of primes congruent to $3 \mod 4$ dividing the square-free part of $m$ are equal. We established this via elementary arguments, and the similarity matrix may be computed explicitly, provided all relevant primes can be written as a sum of two squares. 
\item The Hilbert symbols associated with $\langle m,m\rangle$ are $(m,m)_{p}$, where we allow $p$ to range over the odd primes. The bilinearity of the Hilbert symbol reduces its evaluation to the Legendre symbol, and Gauss' Reciprocity Law allows practical evaluation of the Legendre symbol. The Hilbert symbol $(m,m)_{p}$ evaluates to $-1$ if and only if $p$ is a prime congruent to $3 \mod 4$ dividing the square free part of $m$.
\item In dimension two, positive definite quadratic forms are equivalent if and only if their Hilbert symbols agree at all odd primes.\end{enumerate} 

In the next section, we extend the Hilbert symbol to an invariant of quadratic forms in dimension $n$.

\section{Invariants for Quadratic forms in dimension $n$}

Recall that a rational quadratic form in $n$ dimensions is represented by a symmetric $n \times n$ matrix $A$ with integer entries (with respect to a fixed basis). With respect to another basis, the matrix of the form is $M^{\top}AM$ where $M$ is the relevant change-of-basis matrix. 

\begin{definition}
A function $f: \mathrm{Mat}_{n}(k) \rightarrow k$ is an \textit{invariant of quadratic forms} if $f(A) = f(M^{\top}AM)$ for any invertible matrix $M$ and any symmetric matrix $A$. 
\end{definition}

We begin with two obvious invariants of quadratic forms. 

\begin{proposition} 
The \textit{discriminant} of a square integer matrix $A$ is the square-free part of the determinant of $A$. 
The \textit{signature} of $A$ is the number of positive, zero and negative eigenvalues of $A$. 
If $A$ is a matrix representing the quadratic form $Q$, then the discriminant and signature of $A$ are invariants of $Q$.
\end{proposition}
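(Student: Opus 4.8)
The plan is to treat the two invariants separately, since their proofs are of quite different character: the discriminant is essentially a one-line determinant computation, whereas the signature requires Sylvester's Law of Inertia and is the real content of the proposition. For the discriminant I would compute directly. As $M$ is invertible, $\det(M^{\top}AM) = \det(M^{\top})\det(A)\det(M) = \det(M)^{2}\det(A)$. Since $M$ has rational entries, $\det(M)^{2}$ is the square of a rational number, so $\det(M^{\top}AM)$ and $\det(A)$ differ only by a rational square. Multiplication by a rational square does not alter the square-free part, viewed as a class in $\mathbb{Q}^{\times}/(\mathbb{Q}^{\times})^{2}$ (and agreeing with the integer square-free part whenever $M^{\top}AM$ is itself integral), and hence the discriminant is unchanged under congruence.

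For the signature, the subtlety is that the eigenvalues of $A$ and of $M^{\top}AM$ genuinely differ, since congruence is not conjugation, as the example of $S$ above already illustrates. One therefore cannot simply track eigenvalues, and I would instead exhibit each count as a basis-free geometric quantity. For a real symmetric matrix $A$ let $\pi_{+}(A)$ denote the largest dimension of a subspace $W \subseteq V$ on which the form is positive definite, that is $w^{\top}Aw > 0$ for every non-zero $w \in W$, and define $\pi_{-}(A)$ analogously with the reverse inequality. The key observation is that $w^{\top}(M^{\top}AM)w = (Mw)^{\top}A(Mw)$, so the invertible linear map $w \mapsto Mw$ carries the positive-definite subspaces for $M^{\top}AM$ bijectively onto those for $A$ while preserving dimensions; consequently $\pi_{+}(M^{\top}AM) = \pi_{+}(A)$, and likewise for $\pi_{-}$. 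Thus $\pi_{+}$ and $\pi_{-}$ are congruence invariants by construction.

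It then remains to identify $\pi_{+}(A)$ with the number of positive eigenvalues of $A$, and $\pi_{-}(A)$ with the number of negative eigenvalues. Here I would invoke the spectral theorem to write $A = P^{\top}DP$ with $P$ orthogonal and $D$ the diagonal matrix of eigenvalues, so that in the corresponding coordinates the form reads $\sum_{i}\lambda_{i}x_{i}^{2}$. The span of the eigenvectors with positive eigenvalues is a positive-definite subspace whose dimension equals the number of positive eigenvalues, giving one inequality; conversely any subspace of strictly larger dimension must meet the span of the eigenvectors with non-positive eigenvalues non-trivially by a dimension count, and there the form is $\le 0$, giving the reverse inequality. Combining this with the previous paragraph shows that the positive and negative eigenvalue counts are congruence invariants. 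The zero count is then handled by the observation that $\operatorname{rank}(M^{\top}AM) = \operatorname{rank}(A)$ for invertible $M$, so the number of zero eigenvalues $n - \operatorname{rank}(A)$ is invariant as well.

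The main obstacle is precisely this identification of the max-min subspace dimension with the eigenvalue count, which is in effect a proof of Sylvester's Law of Inertia; everything else is formal. The conceptual point to get right is that the eigenvalues are the wrong data to track, and that replacing them with the intrinsic, coordinate-free quantities $\pi_{+}$ and $\pi_{-}$ makes invariance under the change of basis $w \mapsto Mw$ immediate.
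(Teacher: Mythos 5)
Your proof is correct, but it supplies considerably more than the paper itself does: the authors give no proof of this proposition at all, dismissing the discriminant as trivial (the same determinant computation you give, $\det(M^{\top}AM)=\det(M)^{2}\det(A)$) and, for the signature, simply citing Sylvester's Law of Inertia from the literature. Your argument is in effect a self-contained proof of that law, and it is the standard one: replace the eigenvalue counts by the coordinate-free quantities $\pi_{+}$ and $\pi_{-}$, whose congruence invariance is immediate from $w^{\top}(M^{\top}AM)w=(Mw)^{\top}A(Mw)$, then identify them with the eigenvalue counts via the spectral theorem and a dimension count, and dispose of the zero count by rank invariance. You also handle a point the paper glosses over: under a rational congruence the matrix $M^{\top}AM$ need not be integral, so ``square-free part'' must be read as a class in $\mathbb{Q}^{\times}/(\mathbb{Q}^{\times})^{2}$, which your formulation does correctly. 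What the paper's approach buys is brevity in an expository article aimed at combinatorialists; what yours buys is self-containedness, which is arguably more in the spirit of the paper's stated goal of giving complete proofs of the results it requires.
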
 

It is trivial to see that the discriminant is an invariant of quadratic forms. That the signature is an invariant does 
require proof: in the special case of the real field, this is Sylvester's \textit{Law of Inertia}. The proof is contained in any text discussion quadratic forms, \cite{Serre, OMeara}. In this section, we develop more subtle arithmetic 
invariants of quadratic forms. This theory was developed by Hilbert and Minkowski in the early twentieth 
century, and placed in its final form by Hasse. We follow the exposition of B. W. Jones quite closely, \cite{JonesB}. 

Recall that a \textit{minor} of a matrix is the determinant of a square submatrix. The \textit{first minor} $M_{i,j}$ of an $n \times n$ matrix $M$ is obtained by deleting row $i$ and column $j$, and taking the determinant of the $(n-1)\times (n-1)$ submatrix remaining. For $1 \leq k \leq n$, the $k^{\textrm{  th}}$ \textit{leading minor} of $M$ is the determinant of the $k \times k$ submatrix in the upper left of $M$, which we denote $m_{k}$. In particular, $m_{n-1} = M_{n,n}$ and $m_{n} = \det(M)$.
 
\begin{definition} 
Let $A$ be an $n \times n$ symmetric matrix with rational entries. The \textit{Pall invariant} of $A$ at the prime $p$ is 
\[ c(A, p) = (-1, -m_{n})_{p} \prod_{i=1}^{n-1} (m_{i}, -m_{i+1})_{p}\,,\]
where $m_{i}$ is the $i^{\textrm{th}}$ leading minor of $A$. 
\end{definition} 

To prove that $c(A, p)$ is an invariant of quadratic forms we require the following lemma on determinants, which was already well-known in the nineteenth century. 

\begin{lemma}\label{Hadamard-Minors-Lemma}  
Let $M$ be an $n\times n$ symmetric positive definite matrix. 
Denote by $M_{i,j}$ a first minor of $M$, and $i\neq j$, let $M_{[i,j]}$ be the $(n-2)\times (n-2)$ minor obtained by removing the $i$-th and $j$-th rows and the $i$-th and $j$-th columns of $M$. Then
\[\det(M)\det(M_{[i,j]})=M_{i,i}M_{j,j}-(M_{i,j})^2,\]
\end{lemma}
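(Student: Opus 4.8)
The plan is to recognise this as the symmetric specialisation of the classical Desnanot--Jacobi identity and to prove it by expressing the three first minors $M_{i,i}, M_{j,j}, M_{i,j}$ together with the $(n-2)\times(n-2)$ minor $\det(M_{[i,j]})$ in terms of entries and minors of the inverse matrix $M^{-1}$. Since $M$ is positive definite it is invertible, so $M^{-1} = \frac{1}{\det M}\operatorname{adj}(M)$ is available, and the cofactor formula gives its entries as $(M^{-1})_{ab} = (-1)^{a+b} M_{b,a}/\det M$.

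First I would compute the $2\times 2$ principal submatrix of $M^{-1}$ indexed by rows and columns $\{i,j\}$ directly from the cofactor formula. Its determinant is $(M^{-1})_{ii}(M^{-1})_{jj} - (M^{-1})_{ij}(M^{-1})_{ji}$, and because $M$ is symmetric we have $M_{i,j} = M_{j,i}$ (the submatrix obtained by deleting row $i$ and column $j$ is the transpose of the one obtained by deleting row $j$ and column $i$). The two cross terms each carry a factor $(-1)^{i+j}$, whose product is $+1$, so the signs cancel and this determinant equals $\big(M_{i,i}M_{j,j} - (M_{i,j})^2\big)/(\det M)^2$.

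Second, I would evaluate the same $2\times 2$ principal minor of $M^{-1}$ by a Schur-complement argument. After a simultaneous permutation of rows and columns moving the indices $\{i,j\}$ into a corner --- which changes neither $\det M$, nor the symmetry of $M$, nor the value of the principal minor $\det(M_{[i,j]})$ --- write $M$ in block form and use that the corresponding block of $M^{-1}$ is the inverse of a Schur complement. This yields $\det\big((M^{-1})[\{i,j\},\{i,j\}]\big) = \det(M_{[i,j]})/\det M$. Equating the two evaluations and clearing the factor $\det M$ produces the stated identity.

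I expect the main obstacle to be purely bookkeeping rather than conceptual: tracking the signs introduced by the cofactor formula and by the simultaneous permutation, and confirming that $\det(M_{[i,j]})$ is genuinely the complementary principal minor after the indices are moved to the corner. The essential role of the symmetry hypothesis is that the off-diagonal contribution collapses to $M_{i,j}M_{j,i} = (M_{i,j})^2$ rather than a product of two distinct first minors. Alternatively one could simply cite Jacobi's theorem on the minors of the adjugate and read off the $k=2$ case, but the inverse-matrix computation above is self-contained and matches the elementary spirit of the paper.
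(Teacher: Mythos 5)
Your proposal is correct and is essentially the paper's own proof: both pass to the inverse $N = M^{-1}$ and evaluate its $2\times 2$ principal minor on rows and columns $\{i,j\}$ in two ways --- once via the cofactor/adjugate formula, with symmetry collapsing the cross terms to $(M_{i,j})^2/(\det M)^2$, and once as $\det(M_{[i,j]})/\det M$ after a simultaneous row/column permutation moving $\{i,j\}$ into a corner block. The only difference is in bookkeeping: where you invoke the Schur-complement/block-inverse formula for the second evaluation, the paper obtains the same complementary-minor identity from the one-line block factorization of $N$ against a block-triangular matrix (giving $\det(N)\det(M_{[i,j]})$ equal to the corner minor of $N$ directly), which incidentally never needs $M_{[i,j]}$ itself to be invertible.
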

\begin{proof}
Since $M$ is positive definite, it is invertible. Write $N$ for the inverse of $M$. Decompose both matrices into block matrices, 
\[M=\begin{bmatrix}
M_1 & M_2\\
M_3 & M_4
\end{bmatrix}\,, \,\,\, N=\begin{bmatrix}
N_1 & N_2\\
N_3 & N_4
\end{bmatrix}\,,\]
where $M_{1}$ and $N_{1}$ are $k \times k$ and $M_{4}$ and $N_{4}$ are $(n-k)\times (n-k)$. 
Consider the matrix identity
\[\begin{bmatrix}
N_1 & N_2\\
N_3 & N_4
\end{bmatrix}
\begin{bmatrix}
M_1 & 0\\
M_3 & I
\end{bmatrix}
=
\begin{bmatrix}
I & N_2\\
0 & N_4
\end{bmatrix}\,.
\]
Taking determinants, $\det(N)\det(M_1)=\det(N_4)$. 
Since the determinant of $M$ is unchanged after a \textit{symmetric} row/column permutation, we may assume without loss of generality that $M_1=M_{[i,j]}$ and that 
\[M_4=\begin{bmatrix}
m_{ii} & m_{ij}\\
m_{ij} & m_{jj}
\end{bmatrix}.\]
Since $N$ is the inverse of $M$, the entries of $N$ are first minors of $M$, multiplied by $\det(M)^{-1}$. 
So, up to a $-1$ factor which cancels out in the final formula, we find that 
\[N_4=\frac{1}{\det(M)}\begin{bmatrix}
M_{i,i} & -M_{i,j}\\
-M_{i,j} & M_{j,j}
\end{bmatrix}.\]
Therefore
\[\det(N)\det(M_{1}) = \det(M)^{-1} \det(M_{[i,j]})=\det(N_4)=\det(M)^{-2}(M_{i,i}M_{j,j}-(M_{i,j})^2).\]
Multiplying by $\det(M)^2$ concludes the proof.\qedhere
\end{proof}

By the theory of the Row Echelon Form, any invertible matrix may be reduced to the identity by a sequence of elementary row operations: permuting rows, adding one row to another and multiplying a row by a scalar. Consequently, $\GL_{n}(\mathbb{Q})$ is generated by the matrices associated with these row operations.

\begin{theorem}[Section 11, \cite{JonesB}] \label{Pall}
Let $M$ be a positive definite rational matrix. For each $N\in\GL_n(\mathbb{Q})$, 
\[c(M,p)=c(N^{\intercal}MN, p)\,.\]
\end{theorem}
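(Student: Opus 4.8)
The plan is to use the fact, recalled just before the statement, that $\GL_n(\mathbb{Q})$ is generated by the elementary matrices associated with row operations. Since congruence by an invertible matrix preserves both symmetry and positive-definiteness, a factorisation $N = E_1 \cdots E_r$ into generators produces a chain $M_0 = M$, $M_t = E_t^{\intercal} M_{t-1} E_t$ of positive definite symmetric matrices ending at $N^{\intercal}MN$; so it suffices to prove $c(M,p) = c(E^{\intercal}ME,p)$ for each generator $E$ and then telescope. Moreover a lower-triangular transvection factors as a product of a permutation matrix, an upper-triangular transvection $I + cE_{kj}$ (with $k<j$), and a permutation matrix, so I only need to treat three families: diagonal matrices, upper-triangular transvections $I + cE_{jk}$ with $j<k$, and permutation matrices. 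The last of these reduce to the adjacent transpositions $s_i$ interchanging indices $i$ and $i+1$, since these generate $S_n$.

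The first two families are routine. Conjugating by the diagonal matrix scaling coordinate $j$ by $c$ multiplies $m_i$ by $c^2$ for $i \ge j$ and fixes $m_i$ for $i < j$; since $(c^2,b)_p = 1$ and the symbol is bilinear, every factor of $c(M,p)$ is preserved, including $(-1,-m_n)_p$ and the single factor $(m_{j-1},-m_j)_p$ straddling index $j$, for which $(m_{j-1}, -c^2 m_j)_p = (m_{j-1}, -m_j)_p$. For an upper-triangular transvection the operation $E^{\intercal}ME$ adds $c$ times row $j$ to row $k$ and $c$ times column $j$ to column $k$ with $j<k$; inspecting the leading $i\times i$ block shows it is unaffected for every $i$ (either row and column $k$ lie outside the block, or the operation is an interior row/column operation leaving the determinant fixed). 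Thus all leading minors, and hence $c(M,p)$, are literally unchanged.

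The transposition $s_i$ is the crux. A symmetric swap of indices $i$ and $i+1$ alters exactly one leading minor: $m_j$ is preserved for $j\neq i$ (for $j\ge i+1$ the block undergoes a double row/column swap with net sign $+1$, and for $j<i$ it is untouched), while $m_i$ is replaced by $m_i' = \det M[\{1,\dots,i-1,i+1\}]$. Hence $c(M,p)$ and $c(s_i^{\intercal} M s_i, p)$ differ only in the two factors containing $m_i$, and the required equality reduces (setting $m_0 = 1$ and using bilinearity and symmetry) to
\[ (m_i m_i',\, -m_{i-1}m_{i+1})_p = 1. \]
Here Lemma \ref{Hadamard-Minors-Lemma}, applied to the leading $(i+1)\times(i+1)$ submatrix with its last two indices, supplies the key relation $m_i m_i' = m_{i-1}m_{i+1} + q^2$, where $q$ is the relevant first minor of that submatrix. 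Writing $P = m_i m_i'$, so $-m_{i-1}m_{i+1} = q^2 - P$, the claim becomes $(P, q^2 - P)_p = 1$: this holds by $(a,-a)_p = 1$ when $q=0$, and otherwise by factoring out the square $q^2$ and invoking $(a,1-a)_p = 1$ with $a = P/q^2$. Positive-definiteness guarantees via Sylvester's criterion that all minors involved are strictly positive, so no argument of a Hilbert symbol vanishes; the standard extension of the symbol from integers to nonzero rationals, under which the quoted identities persist, is used throughout.

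The main obstacle is precisely this transposition case: one must correctly determine which minors change under a symmetric adjacent swap, recognise $m_i'$ as the principal minor omitting index $i$, and see that Lemma \ref{Hadamard-Minors-Lemma} is exactly the tool converting the product $m_i m_i'$ into $m_{i-1}m_{i+1} + q^2$ — the form needed to collapse the two affected symbols via $(a,1-a)_p = 1$. Everything else is bookkeeping with bilinearity and squares.
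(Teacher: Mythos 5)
Your proof is correct and follows essentially the same route as the paper: reduce to the elementary-operation generators of $\GL_n(\mathbb{Q})$, observe that transvections fix all leading minors and dilations scale them by squares, and handle adjacent transpositions by combining Lemma \ref{Hadamard-Minors-Lemma} with the identities $(a,-a)_p=1$ and $(a,1-a)_p=1$ to show $(m_im_i',\,-m_{i-1}m_{i+1})_p=1$. The only cosmetic difference is that the paper closes the transposition case by exhibiting the explicit solution $X=Y=d^{-1}$ to $m_i'm_iX^2+(d^2-m_i'm_i)Y^2=1$ rather than factoring out $q^2$ and invoking $(a,1-a)_p=1$, which amounts to the same computation.
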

\begin{proof}
It suffices to show that the Pall invariants are preserved by matrices representing elementary row operations.\\
To show that the Pall invariants are preserved by arbitrary (simultaneous) permutation of rows and columns, it will suffice to show that they are preserved by transpositions $(i, i+1)$ of adjacent rows or columns, since these permutations generate the symmetric group. 

The leading minors of the matrix $M'=P^{\intercal}MP$ coincide with those of $M$ except possibly for 
$m_i$ and $m'_i$, since all other leading minors are either unchanged, or have a pair of rows and columns swapped, 
leaving the determinant unchanged. It will suffice to show that
\[(m_{i-1},-m'_i)_{p}(m'_i,-m_{i+1})_{p}=(m_{i-1},-m_i)_{p}(m_i,-m_{i+1})_{p}.\]
Multiplying by $(m_{i-1},-1)$ on both sides and using bilinearity of the Hilbert symbol, we find that the above equation is equivalent to
\[(m'_i,-m_{i-1}m_{i+1})_{p}=(m_i,-m_{i-1}m_{i+1})_{p}.\]
Now, applying Lemma \ref{Hadamard-Minors-Lemma} to the leading $(i+1)\times (i+1)$ submatrix underlying the minor $m_{i+1}$ with indices $i$ and $j=i+1$, 
\[m_{i+1}m_{i-1}=m'_im_i-d^2,\]
for some $d\in \mathbb{Q}$. Thus we must check that
\[(m'_i,d^2-m'_im_i)_{p}=(m_i,d^2-m'_im_i)_{p}\,,\]
or equivalently $(m'_im_i,d^2-m'_im_i)_{p}=1$. If $d = 0$ the result holds since $(x,-x)_{p} = 1$ for any non-zero $x$ and any prime $p$. Otherwise, 
\[ m'_{i} m_{i} X^{2} + (d^{2} - m'_{i}m_{i}) Y^{2} = 1 \] 
has a solution by taking $X = Y = d^{-1}$ and so the Hilbert symbol evaluates to $1$ as required. 
Unravelling this chain of equivalences to its start, we conclude that the Pall invariants are preserved by 
simultaneous permutation of rows and columns.

Next, consider the elementary row operation which adds row $i$ to row $j$, and adds column $i$ to column $j$. 
Since rows may be permuted arbitrarily, we assume without loss of generality that $i<j$. This implies that all leading principal minors are unchanged, so the Pall invariant stays constant.

Finally, in the case of scalar multiplication, column and row $i$ are both multiplied by a non-zero scalar $\lambda$. The involved determinants are multiplied by $\lambda^2$. 
Again working with the definition of the Hilbert symbol, it is clear that  $(\lambda^2 m_{j}, b)_{p}=(m_{j},b)_{p}$ 
for any $b\in \mathbb{Q}^{\ast}$. Hence the Pall invariants are preserved by scalar multiplication. 

This concludes the proof: any invertible matrix $N$ may be expressed as a product of elementary row operation matrices, and the elementary row operations preserve the Pall invariants. So the Pall invariants of $N^{\top}MN$ and $M$ agree. 
\end{proof}

Since the Hilbert symbol is defined only at non-zero arguments, we used implicitly in the argument above that the 
minors of $M$ are non-vanishing. This follows from the assumption that $M$ is positive definite via Sylvester's criterion (but can be evaded by slightly lengthier \textit{ad hoc} arguments in the general case).

\begin{remark} 
The invariants of Theorem \ref{Pall} are a complete set, in the sense that two quadratic forms having the 
same rank, discriminant, and inertia, and taking the same value at all primes (including now $p = 2, \infty$) are necessarily similar over the rationals. This is essentially the statement of the Hasse-Minkowski theorem. Thus, the (difficult) question of rational equivalence is reduced to (easy) questions about much larger local fields, which are the reals and the $p$-adics, where equivalence testing is reduced to Pall invariants and Hilbert symbols. The Hasse-Minkowski theorem is often referred to as the \textit{local-global} principle. For a complete proof, the reader is referred to Serre's \textit{Arithmetic}, \cite{Serre}.
\end{remark} 

The astute reader will notice that we justified the reduction of quadratic forms to diagonal matrices by applying a change of basis operation in Section \ref{Background}, while in Theorem \ref{Pall} we applied a `quadratic' transformation to the quadratic space by replacing the matrix $M$ of the quadratic form by the matrix $N^{\top}MN$. This is precisely the distinction between computing the image of a linear transformation under a change of basis and considering a pair of distinct but conjugate linear transformations.

In any case, Theorem \ref{Pall} allows us to reduce to diagonal matrices, at which point the invariants may be expressed more concisely. 

\begin{definition}
The \textit{Hasse-Minkowski invariant} of a polarised (i.e. diagonal) quadratic form $Q = \langle a_{1}, \ldots, a_{n} \rangle$ at the prime $p$ is 
\[ H(Q, p) = \prod_{i < j} (a_{i}, a_{j})_{p} \,.\] 
\end{definition} 

\begin{proposition} 
At any odd prime, the Hasse-Minkowski and Pall invariants are equal for a polarised (diagonal) form of discriminant $1$.
\end{proposition}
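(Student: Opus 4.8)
The plan is to evaluate both invariants directly on the diagonal matrix and match them factor-by-factor after routine manipulation of Hilbert symbols. First I would record the leading minors of $A = \langle a_{1}, \ldots, a_{n}\rangle$: since $A$ is diagonal, $m_{k} = \prod_{i=1}^{k} a_{i}$, so that $m_{i+1} = m_{i}\,a_{i+1}$ and $m_{n} = \det A$. Positive-definiteness guarantees every $a_{i}$ and every $m_{k}$ is non-zero, so bilinearity of the Hilbert symbol applies without qualification throughout.

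Next I would dispose of the leading factor $(-1, -m_{n})_{p}$ of the Pall invariant. Because $Q$ has discriminant $1$, the determinant $m_{n}$ is a rational square, so $-m_{n}$ differs from $-1$ by a square. Using bilinearity and $(a, b^{2})_{p} = 1$, this factor equals $(-1,-1)_{p}$, which is $1$ at every odd prime (immediate from the displayed formula for the Hilbert symbol with $\alpha = \beta = 0$ and $u = v = -1$). Hence at an odd prime the Pall invariant collapses to
\[ c(A,p) = \prod_{i=1}^{n-1}(m_{i}, -m_{i+1})_{p}\,. \]

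The core step is to simplify a single factor. Writing $m_{i+1} = m_{i} a_{i+1}$ and applying bilinearity gives $(m_{i}, -m_{i+1})_{p} = (m_{i}, -m_{i})_{p}\,(m_{i}, a_{i+1})_{p}$, and the identity $(a,-a)_{p} = 1$ annihilates the first symbol, leaving $(m_{i}, a_{i+1})_{p}$. Expanding $m_{i} = \prod_{k=1}^{i} a_{k}$ by bilinearity once more yields $\prod_{k=1}^{i}(a_{k}, a_{i+1})_{p}$. Substituting back and reindexing with $j = i+1$,
\[ c(A,p) = \prod_{i=1}^{n-1}\prod_{k=1}^{i}(a_{k}, a_{i+1})_{p} = \prod_{2 \le j \le n}\,\prod_{1 \le k < j}(a_{k}, a_{j})_{p} = \prod_{k<j}(a_{k}, a_{j})_{p} = H(Q,p)\,, \]
which is precisely the Hasse–Minkowski invariant.

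I do not anticipate a genuine obstacle; the argument is a bookkeeping exercise in bilinearity and the identities $(a,-a)_{p}=1$, $(a,b^{2})_{p}=1$. The two points demanding care are that the reduction of the leading factor uses both hypotheses in an essential way — discriminant $1$ to turn $-m_{n}$ into $-1$ up to squares, and oddness of $p$ to secure $(-1,-1)_{p} = 1$ (at $p=2$ one has $(-1,-1)_{2} = -1$, and the claim would fail) — and that all symbols have non-zero arguments so that bilinearity is legitimate at each step, which positive-definiteness supplies.
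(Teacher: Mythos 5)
Your proposal is correct and follows essentially the same route as the paper's own proof: expand each Pall factor via $(m_{i},-m_{i+1})_{p}=(m_{i},-m_{i})_{p}(m_{i},a_{i+1})_{p}=\prod_{k\le i}(a_{k},a_{i+1})_{p}$, kill the leading factor using discriminant $1$ and $(-1,-1)_{p}=1$ at odd primes, and reindex the double product into $H(Q,p)$. Your added remarks on positive-definiteness guaranteeing non-vanishing minors and on the failure at $p=2$ are sound but do not change the argument.
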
 

\begin{proof} 
Let $Q$ be a quadratic form. By Theorem \ref{Pall}, the Pall invariants do not depend on the choice of symmetric matrix used 
to represent the form. By Proposition \ref{RowOps}, we may take $Q = \langle a_{1}, \ldots, a_{n}\rangle$. We will reduce the Pall invariant to the Hasse-Minkowski invariant. 

Recall that the Hilbert symbol satisfies the following identities: $(a, -a)_{p} = 1$ and $(a, bc)_{p} = (a, b)_{p}(a,c)_{p}$, and that the $k^{\textrm{th}}$ minor is defined as $m_{i} = \prod_{i=1}^{k}a_{i}$. Then 
\[ (m_{k}, -m_{k+1})_{p} = (m_{k}, -m_{k})_{p} (m_{k}, a_{k+1})_{p} = \prod_{i=1}^{k} (a_{i}, a_{k+1})_{p}\,.\] 
By hypothesis, the discriminant is $1$ which means that \[\det(Q)=m_n=a^2 \text{ and } (-1,-m_n)_p=(-1,-a^2)_p=(-1, -1)_{p} = 1\] since $-1$ is coprime to $p$. So for any odd prime, the Pall symbol evaluates to the Hasse-Minkowski symbol as required:
\[ c(Q, p) = (-1, -1)_{p} \prod_{k=1}^{n} \prod_{i=1}^{k-1} (a_{i}, a_{k})_{p} = \prod_{i<k} (a_{i}, a_{k})_{p} = H(Q, p)\,. \qedhere\] 
\end{proof} 

\begin{remark} 
We caution the reader that, as defined, the Pall Invariant of $I_{n}$ is $-1$ at $p = 2, \infty$, while the Hasse-Minkowski invariants of the identity matrix are all $1$. Various authors have adopted different conventions for classifying quadratic forms: arguably the most natural quadratic form is the one composed of a direct sum of hyperbolic planes (over $\mathbb{Q}$, a hyperbolic plane has matrix $\langle 1, -1\rangle$), and these have all Pall invariants equal to $1$. 
\end{remark} 

\begin{example} 
To illustrate the computation of Hasse-Minkowski invariants (and Hilbert symbols), let us decide whether the form $\langle 1, 2, 7, 14\rangle$ is rationally equivalent to $\langle 1,1 , 1, 1\rangle$. 
Both forms are positive definite (and so have the same signature) and have discriminant $1$.
For this it suffices to consider the local invariant at $7$, which is 
\[ H(Q, 7) = (1, 2)_{7} (1, 7)_{7} (1, 14)_{7} (2, 7)_{7} (2, 14)_{7} (7, 14)_{7} \] 
Using bilinearity, we expand the composite terms: 
\[ H(Q, 7) = (1, 2)_{7}^{2} (1, 7)_{7}^{2} (2, 2)_{7} (2, 7)_{7}^{3} (7, 7)_{7} \] 
Now, we cancel square terms: 
\[ H(Q, 7) =  (2, 2)_{7} (2, 7)_{7} (7, 7)_{7} \] 
Of these, the first is $1$ because both arguments are coprime to $7$, and the second is likewise $1$ because $2$ is a square mod $7$. But $7 \equiv 3 \mod 4$, so the last term is $-1$ and the local invariant differs from that of the standard form. Hence the forms are inequivalent over $\mathbb{Q}$. The local invariants typically do not provide any clue about the rational matrix relating one form to the other. 
\end{example} 

At last, we give the long-promised sufficient criterion to exclude a positive definite matrix from being a Gram matrix. 

\begin{theorem} \label{mainGramthm}
Let $G$ be a positive definite rational matrix. 
Suppose that at least one of the following holds:
\begin{enumerate} 
\item $G$ has discriminant different to $1$.
\item For some odd prime, the Hasse-Minkowski symbol of $G$ evaluates to $-1$. 
\end{enumerate} 
Then $G$ is not a Gram matrix; there is no rational matrix $M$ such that $M^{\top}M = G$.
\end{theorem}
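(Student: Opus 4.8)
The plan is to prove the contrapositive: assuming $G$ is a Gram matrix, I will show that \emph{both} alternatives must fail, i.e. that $G$ has discriminant $1$ and that its Hasse--Minkowski symbol equals $+1$ at every odd prime. So suppose $G = M^{\top}M$ for some rational matrix $M$. Since $G$ is positive definite it is invertible, and $\det(G) = \det(M)^{2} \neq 0$ forces $M \in \GL_{n}(\mathbb{Q})$. Writing $G = M^{\top} I_{n} M$ then exhibits $G$ as congruent to the identity matrix, so $G$ and $I_{n}$ represent similar quadratic forms and must agree at every invariant of quadratic forms.

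The discriminant is immediate: $\det(G) = \det(M)^{2}$ is a rational square, so its square-free part is $1$. Hence a matrix of discriminant different from $1$ cannot be a Gram matrix, which disposes of the first alternative.

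For the second alternative I would pass to a diagonalisation. By Proposition \ref{RowOps} there is $N \in \GL_{n}(\mathbb{Q})$ with $N^{\top}GN = \langle a_{1}, \ldots, a_{n}\rangle$, and this diagonal form lies in the same congruence class as $G$, and hence as $I_{n}$. By Theorem \ref{Pall} the Pall invariant is constant on congruence classes, so $c(G, p) = c(I_{n}, p)$ at every odd prime $p$. Since both forms have discriminant $1$, the preceding proposition lets me replace the Pall invariant by the Hasse--Minkowski invariant at odd primes, giving $H(G, p) = H(I_{n}, p)$. Finally $H(I_{n}, p) = \prod_{i<j}(1,1)_{p} = 1$, because $(1,b)_{p} = 1$ for every $b$ (take $x = z = 1$, $y = 0$ in the defining equation $ax^{2}+by^{2}=z^{2}$). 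Thus $H(G, p) = 1$ for every odd prime, and a form with some odd-prime Hasse--Minkowski symbol equal to $-1$ cannot be a Gram matrix.

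The conceptual work has already been done in Theorem \ref{Pall} and in the identification of the Pall and Hasse--Minkowski invariants, so this argument is essentially bookkeeping rather than a genuine obstacle. The one point that needs care is the use of positive-definiteness: it guarantees that $M$ is invertible, so that $G = M^{\top}M$ is a true congruence rather than a bare matrix identity, and, via Sylvester's criterion, that the leading minors are non-zero so that the Hilbert symbols in the Pall invariant are defined. The discriminant-$1$ hypothesis is exactly what licenses the passage from the Pall invariant to the Hasse--Minkowski invariant, so it is no accident that it is built into the standing assumption (3).
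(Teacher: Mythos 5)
Your proof is correct and is essentially the paper's own argument: the paper states Theorem \ref{mainGramthm} without a separate proof, treating it as an immediate consequence of Theorem \ref{Pall}, the identification of Pall and Hasse--Minkowski invariants for discriminant-$1$ diagonal forms, and the invariance of the discriminant---precisely the chain you spell out (congruence of $G = M^{\top}I_{n}M$ to $I_{n}$, hence equality of all invariants, with $\det(G)=\det(M)^{2}$ a square and $H(I_{n},p)=1$ at every odd prime). Your version simply makes explicit the bookkeeping the paper leaves implicit, including the correct ordering in which the discriminant-$1$ conclusion is established first so that the Pall-to-Hasse--Minkowski substitution is licensed.
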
 

\section{Applications in Design theory} 

We are now in a position to give proofs of a number of results in design theory, which depend essentially 
on computing the Hasse-Minkowski invariants of certain potential Gram matrices. Recall that these are only 
necessary (and not sufficient) conditions for the existence of designs. Tracing the historical development of these 
results, we begin with the Bruck-Ryser theorem on projective planes. 

\subsection{The Bruck-Ryser Theorem} 

Recall that a projective plane of order $n$ is a finite geometry in which each line contains $n+1$ points and 
every pair of lines meet at a unique point. This definition is equivalent to a ${0,1}$-matrix of order $n^{2} + n + 1$ which satisfies the equation, 
\[ M^{\top}M = nI + J\] 
where $J$ is the all-ones matrix. To prove the non-existence of certain projective planes, 
it would be sufficient to show that the quadratic 
forms of $I_{n}$ and $nI_{n} +J$ differ. We will do this by computing the Hasse-Minkowski 
invariants of both forms. We begin by \textit{polarising} the matrix $nI + J$. While every symmetric matrix is 
diagonalisable over the reals by an orthogonal matrix (as a consequence of the spectral theorem), such a 
matrix does typically not have entries over the rationals. As such, we need to be a little more careful - while 
we labour the point a little here, it indicates some of the techniques used in working with quadratic forms. 
 
\begin{proposition}\label{polarBRC}
The $d \times d$ matrices $nI+ J$ and $\langle (n+d)d, (2\cdot 1)n, (3\cdot 2)n \ldots, (d\cdot d-1)n\rangle$ are congruent. 
\end{proposition}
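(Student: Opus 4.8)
The plan is to avoid symmetric Gaussian elimination (which here produces unpleasant fractions) and instead write down a single explicit rational congruence matrix $M$ whose columns form an orthogonal basis of eigenvectors of $nI+J$; then $M^{\top}(nI+J)M$ is automatically diagonal, its diagonal entries being each eigenvalue times the squared length of the corresponding basis vector. The starting observation is that $nI+J$ has exactly two eigenspaces over $\mathbb{Q}$: the all-ones vector $\mathbf{1}=(1,\dots,1)^{\top}$ satisfies $(nI+J)\mathbf{1}=(n+d)\mathbf{1}$, while every vector orthogonal to $\mathbf{1}$ is an eigenvector with eigenvalue $n$, since $J$ annihilates $\mathbf{1}^{\perp}$.

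Next I would introduce the integer (Helmert-type) vectors $b_{1}=\mathbf{1}$ and, for $2\le k\le d$, $b_{k}=e_{1}+\dots+e_{k-1}-(k-1)e_{k}$, where $e_{i}$ denotes the $i$-th standard basis vector. Each $b_{k}$ with $k\ge 2$ has coordinate sum $0$, hence lies in $\mathbf{1}^{\perp}$ and is an $n$-eigenvector. A short computation shows these vectors are mutually orthogonal, and that $b_{1}^{\top}b_{1}=d$ while $b_{k}^{\top}b_{k}=(k-1)+(k-1)^{2}=k(k-1)$ for $k\ge 2$. Being $d$ nonzero pairwise-orthogonal vectors they are linearly independent, so the matrix $M$ with columns $b_{1},\dots,b_{d}$ has integer entries and lies in $\GL_{d}(\mathbb{Q})$.

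Finally I would compute $(M^{\top}(nI+J)M)_{ij}=b_{i}^{\top}(nI+J)b_{j}$. Because every $b_{j}$ is an eigenvector, this equals $\lambda_{j}\,b_{i}^{\top}b_{j}$, which vanishes for $i\neq j$ by orthogonality; this is the mechanism that forces the product to be diagonal. The surviving diagonal entries are $\lambda_{1}\,b_{1}^{\top}b_{1}=(n+d)d$ and $\lambda_{k}\,b_{k}^{\top}b_{k}=n\,k(k-1)$ for $2\le k\le d$, which are precisely the entries $(n+d)d,(2\cdot1)n,(3\cdot2)n,\dots,(d(d-1))n$ asserted in the statement. The only genuine work — and the place an error could slip in — is the bookkeeping that verifies mutual orthogonality and the length formula $b_{k}^{\top}b_{k}=k(k-1)$; everything else is dictated by the eigenvector property. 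As a consistency check one can compare determinants: the ratio of the diagonal form to $\det(nI+J)=(n+d)n^{d-1}$ is $d\cdot d!\cdot(d-1)!=(d!)^{2}$, a perfect square, exactly as congruence over $\mathbb{Q}$ demands.
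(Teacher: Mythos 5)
Your proof is correct and is essentially the paper's own argument: your vectors $b_{k}$ are exactly the rational eigenbasis $f_{1}=(1,\dots,1)^{\top}$, $f_{k}=e_{1}+\dots+e_{k-1}-(k-1)e_{k}$ used in the paper, and the diagonal entries arise in the same way, as eigenvalue times squared length, giving $(n+d)d$ and $k(k-1)n$. The extra details you supply (explicit orthogonality check, the length formula, and the determinant consistency check $\det(M)^{2}=(d!)^{2}$) are correct and merely make explicit what the paper leaves implicit.
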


\begin{proof} 
Since every vector is an eigenvector of $nI$, it suffices to choose an orthogonal eigenbasis for $J$ in which 
each basis vector has rational entries. This may be accomplished as follows: 
\[ f_{1}^{\top} = (1,1,1,\ldots, 1), \,\,\, f_{i}^{\top} = (1,1,\ldots,1, -i+1, 0, \ldots, 0), \,\,\, 2 \leq i \leq d\] 
where $f_{i}^{\top}$ contains $-i+1$ in co-ordinate $i$, with $1$'s to the left and $0$'s to the right. 
By linearity, $(n I + J)f_{1} = (n+d)f_{1}$ and $(nI +J)f_{i} = nf_{i}$ for $2 \leq i \leq d$. 
Let $F$ be the matrix with $f_{i}$ in the $i^{\textrm{th}}$ column. 
Then $D = F^{\top} (nI + J) F$ is diagonal, with $D_{1} = (n+d)d$ and $D_{i} = i(i-1)n$ for $2 \leq i \leq d$. 
\end{proof} 

When $d = n^{2} + n + 1$, the eigenvalues of $nI + J$ are $(n+1)^{2}$ with multiplicity $1$ and $n$ with multiplicity $n^{2} + n$. 
Proposition \ref{polarBRC} rewrites $nI +J$ as a product of the usual (real) diagonalisation of $nI+ J$ with 
a matrix $\langle d, 1\cdot 2, 2\cdot 3, \ldots, (n^{2} + n)\cdot(n^{2} + n + 1)\rangle$. It will be convenient for us 
to eliminate this second diagonal matrix from later computations. Let us compare the local invariants 
of a matrix product to those of its terms. 
 
\begin{proposition} \label{prop:split}
Let $A = \langle a_{1}, \ldots, a_{n}\rangle$ and $B = \langle b_{1}, \ldots, b_{n}\rangle$ be quadratic forms. 
Then for any prime $p$, the local invariant of $AB$ at $p$ is equal to 
\[ H(AB, p) = H(A, p)H(B, p) (\Delta A, \Delta B)_{p} \prod_{i=1}^{n} (a_{i}, b_{i})_{p}\,, \] 
where $\Delta A$ is the discriminant of $A$.
\end{proposition}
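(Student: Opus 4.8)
The plan is to expand everything directly from the definition of the Hasse-Minkowski invariant and then repeatedly apply the bilinearity of the Hilbert symbol. Since $A$ and $B$ are diagonal, their product $AB = \langle a_{1}b_{1}, \ldots, a_{n}b_{n}\rangle$ is again diagonal, so $H(AB, p) = \prod_{i<j}(a_{i}b_{i}, a_{j}b_{j})_{p}$. First I would split each factor using bilinearity in both arguments, writing $(a_{i}b_{i}, a_{j}b_{j})_{p} = (a_{i}, a_{j})_{p}(a_{i}, b_{j})_{p}(b_{i}, a_{j})_{p}(b_{i}, b_{j})_{p}$. Collecting the pure-$a$ and pure-$b$ terms immediately recovers the factors $H(A, p)$ and $H(B, p)$, so the entire content of the proposition lies in the cross terms $\prod_{i<j}(a_{i}, b_{j})_{p}(b_{i}, a_{j})_{p}$.

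The key step is to recognise that these cross terms, taken over all unordered pairs, amount to a product over all \emph{ordered} pairs with distinct indices. Using symmetry of the Hilbert symbol to rewrite $(b_{i}, a_{j})_{p} = (a_{j}, b_{i})_{p}$, one sees that $\prod_{i<j}(a_{i}, b_{j})_{p}(a_{j}, b_{i})_{p} = \prod_{i \neq j}(a_{i}, b_{j})_{p}$. I would then complete this to the full double product over all pairs $(i,j)$ by inserting the diagonal terms $\prod_{i}(a_{i}, b_{i})_{p}$; because Hilbert symbols take values in $\{1, -1\}$, multiplying and dividing by this diagonal product are the same operation, so no sign bookkeeping is needed.

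Finally, I would evaluate the completed double product by bilinearity: $\prod_{i,j}(a_{i}, b_{j})_{p} = \prod_{i}\bigl(a_{i}, \prod_{j}b_{j}\bigr)_{p} = \bigl(\prod_{i}a_{i}, \prod_{j}b_{j}\bigr)_{p} = (\det A, \det B)_{p}$. Since the Hilbert symbol is insensitive to square factors, $(\det A, \det B)_{p} = (\Delta A, \Delta B)_{p}$, which supplies the discriminant term. Assembling the four pieces yields the claimed identity.

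I do not anticipate a genuine obstacle: the argument is a bookkeeping exercise in bilinearity. The one point requiring care is the index manipulation in the second paragraph, namely keeping straight which ordered pairs are covered by the two cross-term products and confirming that completing to the full square and then stripping the diagonal introduces exactly the factor $\prod_{i}(a_{i}, b_{i})_{p}$ rather than its inverse. The $\pm 1$-valued nature of the symbol makes this clean, but it is the step most prone to an off-by-a-factor error.
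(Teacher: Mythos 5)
Your proposal is correct and follows essentially the same route as the paper's own proof: expand each factor $(a_ib_i, a_jb_j)_p$ by bilinearity, collect the pure terms into $H(A,p)H(B,p)$, complete the cross terms $\prod_{i\neq j}(a_i,b_j)_p$ to the full double product by absorbing the diagonal (the paper inserts $\prod_i (a_i,b_i)_p^2 = 1$, which is the same $\pm 1$-valued bookkeeping you describe), and evaluate $\prod_{i,j}(a_i,b_j)_p = (\Delta A, \Delta B)_p$ by bilinearity. Your explicit remark that the Hilbert symbol ignores square factors, so $(\det A, \det B)_p = (\Delta A, \Delta B)_p$, is a point the paper leaves implicit.
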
 

\begin{proof} 
By bilinearity of the Hilbert symbol, 
\[ H(AB, p)= \prod_{i < j} (a_{i}, a_{j})_{p} (a_{i}, b_{j})_{p} (a_{j}, b_{i})_{p} (b_{i}, b_{j})_{p} \] 
and since the terms commute, 
\[  H(AB, p) = \prod_{i <  j} (a_{i}, a_{j})_{p} (b_{i}, b_{j})_{p} \prod_{i\neq j}(a_{i}, b_{j})_{p} \,. \] 
Now, add the diagonal terms, $\prod_{i} (a_{i}, b_{i})^{2}_{p} = 1$, gather one copy of each term into the rightmost product of above, 
\[  H(AB, p)= \prod_{i < j} (a_{i}, a_{j})_{p} (b_{i}, b_{j})_{p} \prod_{j}( \prod_{i} a_{i}, b_{j})_{p} \prod_{i} (a_{i}, b_{i})_{p} \,. \] 
Finally, by bilinearity in the second argument, 
\[ H(AB, p) = H(A, p)H(B, p) (\Delta A, \Delta B)_p \prod_{i} (a_{i}, b_{i})_{p} \,. \qedhere\] 
\end{proof} 

\begin{theorem}\label{thm:polarising}
Let $d = n^{2} + n + 1$, for some positive integer $n$.  Then the quadratic forms $nI_{d} + J_{d}$ and the polarised 
form $\langle (n+1)^{2}, n, n, \ldots, n\rangle$ are congruent. 
\end{theorem}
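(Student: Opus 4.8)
The plan is to build directly on the two preceding propositions. Proposition \ref{polarBRC}, specialised to $d = n^{2}+n+1$ so that $n+d = (n+1)^{2}$, already gives
\[ nI_{d} + J_{d} \;\cong\; \langle (n+1)^{2} d,\; 2\cdot 1\, n,\; 3\cdot 2\, n,\; \ldots,\; d(d-1)\, n\rangle\,. \]
I would recognise the right-hand side as the entrywise product $BC$ of the target form $B = \langle (n+1)^{2}, n, \ldots, n\rangle$ with the auxiliary form $C = \langle d,\, 1\cdot 2,\, 2\cdot 3,\, \ldots,\, (d-1)d\rangle$, where $c_{1} = d$ and $c_{i} = i(i-1)$ for $i \geq 2$. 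By transitivity of congruence it then suffices to prove $BC \cong B$, which I would establish by checking that the two forms agree on a complete set of invariants and invoking the completeness of the Hasse--Minkowski invariants recorded in the remark after Theorem \ref{Pall}.

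The signature is immediate: for $n \geq 1$ both forms are positive definite, so they share the inertia at the infinite prime. For the discriminant, a short factorial computation gives $\det C = d\,\prod_{i=2}^{d} i(i-1) = (d!)^{2}$, a perfect square; hence $C$ has trivial discriminant and $BC$ and $B$ have equal discriminant. Because the global product of Hilbert symbols is trivial (Artin's formula, as in the preceding remark), once the signature and discriminant agree it is enough to match the Hasse--Minkowski invariant at the odd primes. Here Proposition \ref{prop:split} does the work: applied to $BC$ it yields
\[ H(BC,p) = H(B,p)\,H(C,p)\,(\Delta B, \Delta C)_{p} \prod_{i=1}^{d} (b_{i}, c_{i})_{p}\,. \]
Since $\Delta C$ is a square, $(\Delta B, \Delta C)_{p} = 1$; since $b_{1} = (n+1)^{2}$ is a square, the $i=1$ cross term is trivial; and the remaining cross terms collapse, using bilinearity together with $\prod_{i\geq 2} c_{i} = d!(d-1)! \equiv d \pmod{\text{squares}}$, to give $\prod_{i=1}^{d} (b_{i},c_{i})_{p} = (n,d)_{p}$. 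Thus the whole theorem reduces to the single local identity $H(C,p) = (n,d)_{p}$ for every odd prime $p$.

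The main obstacle is exactly this local identity. I would first peel off the entry $c_{1} = d$, writing $H(C,p) = (d,d)_{p}\, H(\tilde{C}, p)$ with $\tilde{C} = \langle 1\cdot 2, 2\cdot 3, \ldots, (d-1)d\rangle$, so that (using $(d,d)_{p} = (d,-1)_{p}$) the claim becomes $H(\tilde{C}, p) = (d,-n)_{p}$. To evaluate $H(\tilde{C}, p) = \prod_{k<l}(k(k+1), l(l+1))_{p}$ I would exploit the telescoping square-class identity $k(k+1) \equiv \tfrac{k+1}{k}$, whose product over $k$ is $d$, consistently with the determinant already computed. For a fixed odd prime $p$ the symbol $(k(k+1), l(l+1))_{p}$ is trivial unless one factor is divisible by $p$, that is, unless $k$ or $l$ is congruent to $0$ or $-1$ modulo $p$; this reduces the product to a finite, purely $p$-local bookkeeping of valuations and Legendre symbols of unit parts. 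The relations $\gcd(n,d) = 1$ and $d-1 = n(n+1)$ then pin down the $p$-adic valuations of $d$ and $n$ at precisely the contributing primes, and I expect the telescoping to force the surviving terms to cancel down to $(d,-n)_{p}$. Verifying that this cancellation is exact, rather than leaving a stray sign at an awkward prime such as one dividing $d$, is where the genuine care will be required.
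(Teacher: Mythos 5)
Your setup is the same as the paper's: you invoke Proposition \ref{polarBRC} to write $nI_{d}+J_{d}$ as the entrywise product $BC$, apply Proposition \ref{prop:split}, kill the discriminant term (your computation $\det C = (d!)^{2}$ is correct) and the $i=1$ cross term, and collapse the remaining cross terms by bilinearity to $(n,d)_{p}$ via $\prod_{i\geq 2} c_{i} = d!\,(d-1)! \equiv d$ modulo squares --- this last step is in fact tidier than the paper's term-by-term telescoping. But the proof stops exactly where the real work begins. You have reduced the theorem to the identity $H(C,p)\,(n,d)_{p}=1$ for every odd prime $p$, and for that you offer only a plan: you ``expect the telescoping to force the surviving terms to cancel down to $(d,-n)_{p}$,'' while conceding that ruling out a stray sign at primes dividing $d$ ``is where the genuine care will be required.'' That identity is the content of the theorem, not a detail, so as written the proposal has a genuine gap.

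The paper closes this gap by proving the two factors are separately trivial, and each has a short proof that your valuation-bookkeeping plan obscures. First, $(n,d)_{p}=1$: the symbol is trivial unless $p\mid n$ or $p\mid d$ (not both, as $\gcd(n,d)=1$); if $p\mid n$ then $d=n^{2}+n+1\equiv 1 \pmod{p}$ is a square mod $p$; if $p\mid d$ then $n\equiv (n+1)^{2}\pmod{p}$, since $(n+1)^{2}-n=d$, so again the symbol is $1$. This last congruence --- that $n$ is a quadratic residue modulo every prime divisor of $n^{2}+n+1$ --- is the one piece of arithmetic specific to the projective-plane parameters, and your sketch never isolates it. Second, $H(C,p)=1$: the paper telescopes the double product $\prod_{2\leq i<j\leq d}(i(i-1),j(j-1))_{p}$ down to $\prod_{2\leq i\leq d}(i,i)_{p}(i-1,i)_{p}$ and kills each factor with the identity $(i,i)_{p}(i-1,i)_{p}=((i-1)i,(1-i)i)_{p}=1$; more conceptually, $C=F^{\top}F$ is the Gram matrix of the rational eigenbasis $F$ of Proposition \ref{polarBRC}, hence congruent to $I_{d}$, hence has all invariants trivial. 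By fusing the two facts into the single equation $H(C,p)=(n,d)_{p}$ and proposing to verify it by case analysis on $p$-adic valuations over all pairs $k<l$, you have made the problem look harder than it is. (One further point, minor because the paper shares it: concluding congruence, rather than mere equality of invariants, requires the completeness of the invariants, i.e.\ Hasse--Minkowski; you at least invoke this explicitly.)
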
 

\begin{proof} 
Set $a_{1} = (n+1)^{2}$ and $a_{i} = n$ for $2 \leq i \leq d$, so that the $a_{i}$ are the eigenvalues of $nI + J$; and set $b_{1} = d$ and $b_{i} = i(i-1)$. We will apply Proposition \ref{prop:split} to show that the local invariant $(AB, p)$ is equal to $(A, p)(B, p)$. First, $\det(A) = (n+1)^{2}n^{n^{2}+n}$ is a square, so that $\Delta A = 1$. Hence $(\Delta A, \Delta B)_p = 1$. Similarly, $(a_{1}, b_{1})_{p} = ((n+1)^{2}, b_{1})_{p} = 1$ since the Hilbert symbol is $1$ when either argument is a square of an integer. The following product telescopes: 
\[ \prod_{i=2}^d (a_i,b_i)_p = \prod_{i=2}^{d} (n, i (i-1))_{p} = \prod_{i=2}^{d} (n, i-1)_{p} (n, i)_{p} = (n, 1)_{p}(n,d)_{p} \,.\] 
The first term evaluates to $1$. For the second, observe that it is trivially $1$ for any prime not dividing $n$ or $d$. If $p$ divides $n$, then $d = n^{2} + n + 1 \equiv 1 \mod p$ and the term is $1$. If $p$ divides $d$ then $(n+1)^{2} = n^{2} + 2n + 1 \equiv n \mod d$ because $d = n^{2} + n + 1$, and the term is $1$ since $n$ is a square $\bmod$ $p$. Hence the product of diagonal terms vanishes always. 

Finally, we evaluate the local invariants at the form $B = \langle d, 2\cdot 1, 3 \cdot 2, \ldots, (m-1)m \rangle$. The local invariant at $p$ is 
\begin{multline} \prod_{i=2}^{d} (d, i(i-1))_p \prod_{2 \leq i < j\leq d} (i(i-1), j(j-1))_p=\\ \prod_{i=2}^d (d, i)_p (d, i-1)_p \prod_{2 \leq i<j \leq d} (i,j)_p(i,j-1)_p(i-1, j)_p(i-1, j-1)_p \end{multline} 
The first product telescopes, leaving only the terms $(d,1)(d,d)$. For a fixed value of $i$, the terms in the second product telescope to leave a remainder $(i,i)_p(i,d)_p(i-1,i)_p(i-1, d)_p$. In the product
\[ \prod_{2 \leq i < d}(i,i)_p(i,d)_p(i-1,i)_p(i-1, d)_p\,,\]
the terms $(i,d)_p$ and $(i-1,d)_p$ cancel out except for $(d-1,d)_p$ and $(1,d)_p$. The terms $(d,1)_p$ and $(1,d)_p$ cancel by symmetry.
Collecting the remaining terms we can see that the local invariant is equal to 
\[ \prod_{2 \leq i \leq d}(i,i)_p(i-1,i)_p. \]
Using equalities for the Hilbert symbols we can see that $(i,i)_p(i-1,i)_p=(i,(1-i)i)_p(i-1,(1-i)i)_p=((i-1)i,(1-i)i)_p=1$.
In fact, this is not surprising, since this matrix is given by $FF^{\top}$. 

Hence the local invariants of $F^{\top}(nI + J)F$ agree with those of the diagonal form $\langle (n+1)^{2}, n, \ldots, n \rangle$ as required.
\end{proof} 

Finally, we prove the Bruck-Ryser theorem. 

\begin{theorem}[Bruck-Ryser]\label{BruckRyser}
Suppose that $\Pi$ is a projective plane of order $n$ where $n \equiv 1,2 \mod 4$. Then $n = a^{2} + b^{2}$, for integers $a$ and $b$.
\end{theorem}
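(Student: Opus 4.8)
The plan is to read the existence of the plane as the assertion that $nI_{d}+J_{d}$ is a Gram matrix, and then to show that Theorem~\ref{mainGramthm} forbids this unless $n$ is a sum of two squares. A projective plane of order $n$ supplies a $\{0,1\}$-matrix $M$ of order $d=n^{2}+n+1$ with $M^{\top}M = nI_{d}+J_{d}$; since $M$ is rational, $nI_{d}+J_{d}$ is a Gram matrix. By Theorem~\ref{thm:polarising} this form is congruent to the diagonal form $Q=\langle (n+1)^{2}, n, \ldots, n\rangle$ with $n^{2}+n$ copies of $n$, so $Q$ is a Gram matrix as well. By the contrapositive of Theorem~\ref{mainGramthm}, $Q$ must then have discriminant $1$ and trivial Hasse-Minkowski invariant at every odd prime, and I would extract the conclusion from the latter condition.

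The discriminant is automatically $1$: the determinant is $(n+1)^{2}n^{n^{2}+n}$, and since $n^{2}+n=n(n+1)$ is even, both factors are perfect squares. For the local invariants I would evaluate $H(Q,p)=\prod_{i<j}(a_{i},a_{j})_{p}$ directly. Because $(n+1)^{2}$ is a square, every Hilbert symbol involving the first coordinate is trivial, and the surviving factors are precisely the $\binom{n^{2}+n}{2}$ copies of $(n,n)_{p}$ arising from unordered pairs among the remaining diagonal entries. Thus $H(Q,p)=(n,n)_{p}^{\binom{n^{2}+n}{2}}$, and its value depends only on the parity of $\binom{n^{2}+n}{2}$.

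The crux of the argument is this parity computation, which is what drags the residue of $n$ modulo $4$ into the picture. Writing $m=n^{2}+n$, one notes $m$ is even, so $\binom{m}{2}=\tfrac{m(m-1)}{2}$ has the same parity as the triangular number $n(n+1)/2$, and a short case check shows $n(n+1)/2$ is odd exactly when $n\equiv 1,2 \pmod{4}$. This is precisely the hypothesis of the theorem, and under it we get $H(Q,p)=(n,n)_{p}$ for every odd prime $p$. I expect this parity bookkeeping to be the only place requiring genuine care; everything else is mechanical once the machinery of Section~3 is assembled.

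Finally I would invoke the two-dimensional analysis. By the computation underlying Theorem~\ref{HMdim2}, $(n,n)_{p}=-1$ holds exactly when $p\equiv 3 \pmod{4}$ divides the square-free part of $n$. Since $Q$ is a Gram matrix, all these symbols equal $1$, so no prime congruent to $3$ modulo $4$ divides the square-free part of $n$. By the characterization following Theorem~\ref{FermatThm}, this is equivalent to $n$ being expressible as $a^{2}+b^{2}$ with $a,b\in\mathbb{Z}$, completing the proof.
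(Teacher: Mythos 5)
Your proposal is correct and follows essentially the same route as the paper: reduce via Theorem~\ref{thm:polarising} to the diagonal form $\langle (n+1)^{2}, n, \ldots, n\rangle$, observe that all symbols involving the square first entry are trivial so that $H(Q,p)=(n,n)_{p}^{\binom{n^{2}+n}{2}}$, and use the parity of that exponent to bring in the hypothesis $n \equiv 1,2 \pmod{4}$. Your endgame---citing the computation behind Theorem~\ref{HMdim2} that $(n,n)_{p}=-1$ exactly when $p \equiv 3 \pmod{4}$ divides the square-free part of $n$, then applying Fermat's characterization---is, if anything, slightly more careful than the paper's direct passage from $(n,n)_{p}=1$ to solvability of $nx^{2}=y^{2}+z^{2}$, but it is the same argument in substance.
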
 

\begin{proof} 
By Theorem \ref{thm:polarising}, it suffices to compute the invariants of the quadratic form $A = \langle (n+1)^{2}, n, \ldots, n\rangle$ where there are $n^{2}+ n$ terms equal to $n$, and compare these to the invariants of the identity matrix. A single local invariant equal to $-1$ proves non-existence of the corresponding projective plane, while having all local invariants equal to $1$ is inconclusive. The Hasse-Minkowski Invariant at a prime $p$ is 
\[ H(A, p) = (n^{2} +2n+1, n)_{p}^{n^{2} + n} (n, n)_{p}^{\binom{n^{2}+n}{2}}\,. \]
Clearly the first term vanishes, while the exponent of the second term is odd precisely when $n \equiv 1,2 \mod 4$. 

Suppose now that $p$ is an odd prime divisor of the square-free part of $n$. The local invariant at $p$ reduces to the condition $(n,n)_{p} = 1$. By the definition of the Hilbert symbol, this is equal to $(-1, p)_{p} = (n, -1)_{p}$, which is $1$ if and only if $nx^{2} - y^{2} = z^{2}$ has a solution. But this is precisely equivalent to $nx^{2} = z^{2} + y^{2}$, which requires that $n$ is a sum of two squares. \qedhere
%By the bilinearity of the Hilbert symbol, this is $(p, p)_{p} (p, n/p)_{p}^{2} (n/p, n/p)_{p}$. The rightmost term is $1$ because both arguments are coprime to $p$, and the central term is squared, so evaluates to $1$. Hence the local invariant reduces to $(p,p)_{p}$ which evaluates as $\left(\frac{-1}{p}\right)$. Recall that by quadratic reciprocity, this is $1$ precisely then $p \equiv 1 \mod 4$. So the local invariants agree precisely when the square-free part of $n \equiv 2 \mod 4$ is divisible only by $2$ and by primes which are $1 \mod 4$. 
\end{proof} 

Recall that Fermat's Theorem on sums of two squares gives a characterisation of the permissible values of $n$ in Theorem \ref{BruckRyser}: they are precisely those for which the square free part of $n$ is not divisible by a prime congruent to $3$ modulo $4$. Thus, Theorem \ref{BruckRyser} rules out projective planes of order $6, 14, 21, 22, 30, \ldots$

\subsection{Bruck-Ryser-Chowla theorem}

An extension of the Bruck-Ryser theorem to arbitrary symmetric designs was achieved in collaboration with Chowla. Numerous proofs appear in the literature; we sketch an extension of the arguments given for projective planes. 

\begin{proposition} \label{prop:combmat}
The local invariants of the form represented by $(a-b)I_{d} + bJ_{d}$ are given by 
\[(a-b+db, a-b)_{p}^{d-1}(a-b, a-b)_{p}^{\binom{d-1}{2}}(a-b, d)_{p}(a-b+db, d)_{p} \,.\]
\end{proposition}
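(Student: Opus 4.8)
The plan is to mirror the proof of Theorem~\ref{thm:polarising} almost verbatim, the essential point being that passing from $nI_d+J_d$ to $(a-b)I_d+bJ_d$ alters only the two eigenvalues while leaving the rational eigenbasis (and hence the squared-norm factors) untouched. First I would diagonalise using the same matrix $F$, with columns $f_1,\ldots,f_d$, constructed in Proposition~\ref{polarBRC}. Since $Jf_1=df_1$ and $Jf_i=0$ for $i\geq 2$, the eigenvalues of $(a-b)I_d+bJ_d$ are $a-b+db$ on $f_1$ and $a-b$ with multiplicity $d-1$; the diagonal entry of $D=F^{\top}\big((a-b)I_d+bJ_d\big)F$ in position $i$ is this eigenvalue times $f_i^{\top}f_i$. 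Using $f_1^{\top}f_1=d$ and $f_i^{\top}f_i=i(i-1)$ from Proposition~\ref{polarBRC}, I obtain the explicit congruence
\[ D = \langle (a-b+db)\,d,\ (a-b)\cdot 2\cdot 1,\ \ldots,\ (a-b)\cdot d(d-1)\rangle. \]
By Theorem~\ref{Pall} the local invariant is unchanged under congruence, so it suffices to evaluate the invariant of $D$.

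Next I would factor $D=AB$ with $A=\langle a-b+db,\,a-b,\ldots,a-b\rangle$ (the eigenvalues) and $B=\langle d,\,2\cdot 1,\ldots,d(d-1)\rangle$ (the squared norms), writing $A_1=a-b+db$, $A_i=a-b$ for $i\geq 2$, and $B_1=d$, $B_i=i(i-1)$. Applying Proposition~\ref{prop:split} gives
\[ H(AB,p)=H(A,p)\,H(B,p)\,(\Delta A,\Delta B)_p\prod_{i}(A_i,B_i)_p. \]
Counting the $d-1$ pairs involving the first eigenvalue and the $\binom{d-1}{2}$ pairs among the remaining ones yields $H(A,p)=(a-b+db,a-b)_p^{\,d-1}(a-b,a-b)_p^{\binom{d-1}{2}}$, which is exactly the first two factors of the claimed expression. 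The form $B$ coincides with the one appearing in Theorem~\ref{thm:polarising}, where $H(B,p)=1$ was established; moreover $\det B=\det(F)^{2}$ is a perfect square, so $\Delta B=1$ and hence $(\Delta A,\Delta B)_p=(\Delta A,1)_p=1$.

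It remains to evaluate the diagonal product $\prod_i(A_i,B_i)_p$, which is the one genuinely computational step. The $i=1$ term is $(a-b+db,d)_p$, and for $i\geq 2$ bilinearity splits $(a-b,i(i-1))_p=(a-b,i)_p(a-b,i-1)_p$; the product over $2\leq i\leq d$ telescopes, every symbol $(a-b,k)_p$ with $2\leq k\leq d-1$ appearing twice and cancelling, leaving $(a-b,d)_p$ since $(a-b,1)_p=1$. Collecting the surviving factors produces $(a-b,d)_p(a-b+db,d)_p$, the last two factors, and combining all the pieces gives the stated product.

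I expect no serious obstacle: the argument is a routine adaptation of Theorem~\ref{thm:polarising}, and the telescoping bookkeeping is the only place demanding care. The single conceptual point worth flagging is the contrast with the projective-plane case, where the identity $d=n^2+n+1$ forced both $(n,d)_p$ and $((n+1)^2,d)_p$ to equal $1$, so the diagonal product collapsed and only $H(A,p)$ survived. In the present general setting no such relation between $a$, $b$ and $d$ is available, which is precisely why the two extra factors $(a-b,d)_p$ and $(a-b+db,d)_p$ persist in the final formula.
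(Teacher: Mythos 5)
Your proposal is correct and follows essentially the same route as the paper: diagonalise via the rational eigenbasis of Proposition~\ref{polarBRC}, split the diagonal form as $A\cdot B$ with $A$ the eigenvalues and $B=\langle d, 2\cdot 1,\ldots,d(d-1)\rangle$, apply Proposition~\ref{prop:split}, reuse $H(B,p)=1$ from Theorem~\ref{thm:polarising}, and telescope the diagonal product. The only substantive point where you go beyond the paper's terse ``after some computation'' is your observation that $(\Delta A,\Delta B)_p=1$ must now be obtained from $\Delta B=1$ (since $\det B=\det(F)^2$) rather than from $\Delta A=1$ as in Theorem~\ref{thm:polarising} --- a necessary and correctly executed adjustment, since $\Delta A$ need not be $1$ here.
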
 

\begin{proof} 
Let $n = \frac{a-b}{b}$, and consider $b( nI_{d} + J_{d})$. Since $bI_{d}$ is scalar, it commutes with row and column operations. Apply Proposition \ref{polarBRC} to find that the given matrix is congruent to the diagonal matrix $\langle (a-b+bd)d, 2\cdot1\cdot(a-b), \ldots, d(d-1)(a-b) \rangle$. 

Set $A = \langle a-b-bd, a-b, \ldots, a-b\rangle$ and $B = \langle d, 2\cdot 1, \ldots d(d-1)\rangle$ and apply Lemma \ref{prop:split}. The argument given in Theorem \ref{thm:polarising} holds for $B$ and shows that all its local invariants are $1$. Also as in that proof, the product $\prod_{i=1}^{d} (a_{i}, b_{i})_{p}$ telescopes. After some computation, the reader finds that 
\[ H(AB, p) =(a-b+db, a-b)_{p}^{d-1}(a-b, a-b)_{p}^{\binom{d-1}{2}}(a-b, d)_{p}(a-b+db, d)_{p}\,,\]
as required. 
\end{proof}

From this follows the full Bruck-Ryser-Chowla theorem. 

\begin{theorem} \label{BRC}
Suppose that $D$ is the incidence matrix of a symmetric design with parameters $(v,k,\lambda)$. 
If $v$ is even, then $k-\lambda$ is the square of an integer. If $v$ is odd then for all primes $p$ the 
Hilbert symbol $(k-\lambda, (-1)^{v-1/2}\lambda)_{p} = 1$. 
\end{theorem}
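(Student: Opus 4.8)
The plan is to apply Theorem \ref{mainGramthm} to the matrix $G = (k-\lambda)I_v + \lambda J_v$, which a symmetric $(v,k,\lambda)$-design forces to be a Gram matrix, since its incidence matrix satisfies $D^\top D = G$. I would match $G$ to the shape treated in Proposition \ref{prop:combmat} by taking $a = k$, $b = \lambda$ and $d = v$. The only input I need from design theory is the standard parameter relation $k(k-1) = \lambda(v-1)$, which rearranges to $k^2 = (k-\lambda) + \lambda v$. Consequently the quantity $a - b + db$ occurring in Proposition \ref{prop:combmat} is the perfect square $k^2$, and $\det G = k^2 (k-\lambda)^{v-1}$.

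For the even case I would argue purely from the discriminant. Since $v-1$ is odd, $\det G$ is a rational square if and only if $k-\lambda$ is one; so if $k-\lambda$ were a non-square then $G$ has discriminant different from $1$, and Theorem \ref{mainGramthm} forbids $G$ from being a Gram matrix. Hence the design cannot exist unless $k-\lambda$ is a square.

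For the odd case the discriminant is automatically $1$, and the whole argument reduces to simplifying the local invariant furnished by Proposition \ref{prop:combmat}. Because $k^2$ is a square, the symbols $(k^2, k-\lambda)_p$ and $(k^2, v)_p$ are trivial, collapsing that invariant to $(k-\lambda, k-\lambda)_p^{\binom{v-1}{2}}(k-\lambda, v)_p$. I would then invoke bilinearity together with $(x,-x)_p = 1$ to write $(k-\lambda,k-\lambda)_p = (k-\lambda,-1)_p$, and use the parity fact $\binom{v-1}{2} \equiv (v-1)/2 \pmod 2$ (valid since $v-1$ is even) to turn the first factor into $(k-\lambda, (-1)^{(v-1)/2})_p$. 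This leaves the local invariant equal to $(k-\lambda, (-1)^{(v-1)/2} v)_p$.

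The step I expect to be the crux is replacing $v$ by $\lambda$ inside the symbol, which is what brings the expression into the stated form. This rests on the identity $(k-\lambda, \lambda v)_p = 1$, which I would establish by exhibiting the explicit solution $(x,y,z) = (1,1,k)$ to $(k-\lambda)x^2 + \lambda v\, y^2 = z^2$, once more using $k^2 = (k-\lambda) + \lambda v$. Multiplying the invariant by this trivial symbol and discarding the square factor $v^2$ yields exactly $(k-\lambda, (-1)^{(v-1)/2}\lambda)_p$. By Theorem \ref{mainGramthm} the existence of the design forces this to be $1$ at every odd prime; since $k-\lambda > 0$ makes the symbol at infinity trivial, Artin's product formula then forces the value $1$ at $p = 2$ as well, giving the conclusion at all primes.
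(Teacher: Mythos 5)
Your proposal is correct and follows essentially the same route as the paper: the even case via the square determinant $k^2(k-\lambda)^{v-1}$, and the odd case by applying Proposition~\ref{prop:combmat} with $a=k$, $b=\lambda$, $d=v$, exploiting $k^2=(k-\lambda)+\lambda v$ to kill the symbols involving $k^2$, converting $(k-\lambda,k-\lambda)_p^{\binom{v-1}{2}}$ into $(k-\lambda,(-1)^{(v-1)/2})_p$, and trading $v$ for $\lambda$ via the explicit solution of $(k-\lambda)x^2+\lambda v y^2=z^2$. Your only departure is making the deduction at $p=2$ explicit through Artin's product formula and positivity of $k-\lambda$, a point the paper handles by an earlier remark rather than inside the proof.
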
 

\begin{proof} 
Suppose first that $v$ is even. Then $\det(DD^{\top}) = (k + (v-1)\lambda)(k-\lambda)^{v-1} = k^{2}(k-\lambda)^{v-1}$ which must be a perfect square. Hence $k-\lambda$ is necessarily a square. (No condition arises in this way when $v$ is odd.)

Recall that $k(k-1) = (v-1)\lambda$ for any symmetric design, so in particular $(v-1)\lambda + k$ is a perfect square. Apply Proposition \ref{prop:combmat} with $a = k$, $b = \lambda$ and $d = v$. 
Observe that $a - b + bd = k^{2}$, and as usual in design theory write $n = k - \lambda$ so that the 
local invariants of $nI_{v} + \lambda J_{v}$ now take the shape 
\[ (n,n)_{p}^{\binom{v-1}{2}}(n, v)_{p} \]
As in the proof of Theorem \ref{Pall}, observe that $(n, k^{2}-n)_{p} = 1$ for all primes $p$, by solving 
$nX^{2} + (k^{2}-n)Y^{2} = 1$ explicitly with $X = Y = k^{-1}$. But $k^{2}-n = v\lambda$, and bilinearity 
of the Hilbert symbol gives $(n, v)_{p}(n, \lambda)_{p} = 1$ or equivalently $(n,v)_{p} = (n, \lambda)_{p}$. 
Additionally, $\binom{v-1}{2}$ is even if $v \equiv 1 \mod 4$ and odd if $v \equiv 3 \mod 4$. So the local invariants 
coincide with the expression 
\[ (n, (-1)^{v-1/2})_{p}(n, \lambda)_{p} = (n, (-1)^{v-1/2}\lambda)_{p}\,.\]
When $v$ is even, we have already shown that $n = k - \lambda$ is the square of an integer, so 
all local invariants must vanish. 
\end{proof}

\begin{remark}\normalfont
By the Hasse local-global principle, the conditions $(n,(-1)^{(v-1)/2}\lambda)_p=1$ for all odd primes $p$, together with $n=k-\lambda>0$ (non-triviality of the design), imply that $z^2=nx^2+(-1)^{(v-1)/2}\lambda y^2,$
has a non-trivial \textbf{rational} solution. Multiplying by a common denominator of $x, y,$ and $z$, we find a non-trivial integral solution to the Diophantine equation
\[z^2=nx^2+(-1)^{(v-1)/2}\lambda y^2.\]
This is how the Bruck-Ryser-Chowla theorem is typically presented in the design theory literature. This formulation 
suggests to the reader the possibility of constructing explicit solutions to the given equation. These provide no insight 
into the existence of the design, and are typically much harder to compute than the Hilbert symbols themselves. 
\end{remark}

\subsection{Decomposition of symmetric designs} 

We consider the following question: when can the incidence matrix of a symmetric design be written as the sum of two disjoint $\{0,1\}$ matrices, each of which is the incidence matrix of a symmetric design? The obvious necessary condition is that designs with suitable parameters should exist individually. In this section, we develop a further necessary condition in terms of invariants of quadratic forms. 

\begin{proposition} 
Suppose that $M$ is the incidence matrix of a symmetric $(v,k,\lambda)$ design, 
and that $M= M_{1} + M_{2}$ where $M_{i}$ is the incidence matrix of a $(v, k_{i}, \lambda_{i})$ design. 

Then $k= k_{1} + k_{2}$ and $\lambda = \lambda_{1} + \lambda_{2} + \alpha$ where $\alpha = \frac{2k_{1}k_{2}}{v-1}$ is an integer. Furthermore, $M_{1}M_{2}^{\top} + M_{2}M_{1}^{\top} = \alpha (J-I)$. 
\end{proposition}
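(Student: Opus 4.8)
The plan is to exploit the three Gram-matrix identities attached to the designs together with the constant row and column sums of a symmetric design. Recall that a symmetric $(v,k,\lambda)$ design satisfies $MM^{\top} = (k-\lambda)I + \lambda J$, and that every row and column sum of $M$ equals $k$, i.e. $M\mathbf{1} = k\mathbf{1}$ and $\mathbf{1}^{\top}M = k\mathbf{1}^{\top}$, where $\mathbf{1}$ is the all-ones column vector; the analogous statements hold for $M_1$ and $M_2$ with parameters $k_i$ and $\lambda_i$. The claim $k = k_1 + k_2$ falls out immediately: since $M = M_1 + M_2$, row sums are additive, and the common row sum on the left is $k$ while on the right it is $k_1 + k_2$.

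Next I would obtain the matrix identity and the relation among the $\lambda_i$ simultaneously by expanding
\[ MM^{\top} = (M_1 + M_2)(M_1 + M_2)^{\top} = M_1M_1^{\top} + M_2M_2^{\top} + \left(M_1M_2^{\top} + M_2M_1^{\top}\right)\,. \]
Substituting the three design equations $MM^{\top} = (k-\lambda)I + \lambda J$ and $M_iM_i^{\top} = (k_i-\lambda_i)I + \lambda_i J$ and solving for the cross term gives
\[ M_1M_2^{\top} + M_2M_1^{\top} = \bigl[(k-\lambda) - (k_1-\lambda_1) - (k_2-\lambda_2)\bigr]I + (\lambda - \lambda_1 - \lambda_2)J\,. \]
Setting $\alpha := \lambda - \lambda_1 - \lambda_2$ and using $k = k_1 + k_2$, the coefficient of $I$ collapses to $-\alpha$, so the right-hand side is exactly $\alpha(J - I)$. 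This proves the matrix identity and, by definition of $\alpha$, the relation $\lambda = \lambda_1 + \lambda_2 + \alpha$. Integrality of $\alpha$ is then automatic, since it is a difference of the integer parameters $\lambda, \lambda_1, \lambda_2$.

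Finally, to pin down the \emph{value} of $\alpha$ I would pre- and post-multiply the identity $M_1M_2^{\top} + M_2M_1^{\top} = \alpha(J - I)$ by the all-ones vector. Using $\mathbf{1}^{\top}M_i = k_i\mathbf{1}^{\top}$ and $M_i^{\top}\mathbf{1} = k_i\mathbf{1}$, the left-hand side evaluates to $2k_1k_2\,(\mathbf{1}^{\top}\mathbf{1}) = 2k_1k_2 v$, while the right-hand side is $\alpha\,\mathbf{1}^{\top}(J - I)\mathbf{1} = \alpha\,(v^2 - v) = \alpha v(v-1)$. Equating and cancelling $v$ yields $\alpha = 2k_1k_2/(v-1)$, as claimed. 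There is no serious obstacle here; the computation is purely linear-algebraic. The only points that require care are bookkeeping ones: one must work consistently with $MM^{\top}$ (rather than $M^{\top}M$) throughout, and invoke the fact that a symmetric design has \emph{both} row and column sums equal to $k$, which is what makes $M_i^{\top}\mathbf{1} = k_i\mathbf{1}$ available in the final step.
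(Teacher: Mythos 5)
Your proof is correct, and its core coincides with the paper's: both arguments expand $(M_1+M_2)(M_1+M_2)^{\top}$ and substitute the three Gram-matrix equations to get the cross-term identity. Where you genuinely differ is in how the value of $\alpha$ is pinned down. The paper does it by parameter arithmetic alone, before touching the matrix identity: the standard counting relation $\lambda_i(v-1)=k_i(k_i-1)$ for symmetric designs gives
\[ \lambda=\frac{(k_1+k_2)\bigl((k_1+k_2)-1\bigr)}{v-1}=\lambda_1+\lambda_2+\frac{2k_1k_2}{v-1}\,, \]
so $\alpha=2k_1k_2/(v-1)$ and its integrality are read off at once. You instead define $\alpha:=\lambda-\lambda_1-\lambda_2$ (integrality immediate), prove the matrix identity first, and then recover the value by sandwiching $M_1M_2^{\top}+M_2M_1^{\top}=\alpha(J-I)$ between all-ones vectors. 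Your route is slightly longer but stays entirely inside linear algebra: you never invoke the counting identity $k(k-1)=\lambda(v-1)$; in effect your $\mathbf{1}^{\top}(\,\cdot\,)\mathbf{1}$ evaluation re-derives exactly the instance of it that is needed, which fits the paper's stated philosophy of replacing counting arguments by matrix computations. The paper's route is shorter and makes the parameter relation logically independent of the decomposition. Two bookkeeping points you handle that the paper leaves implicit, and which are worth keeping: the explicit derivation of $k=k_1+k_2$ from additivity of row sums, and the fact that a symmetric design has both row and column sums equal to $k$ (equivalently, that $MM^{\top}$ and $M^{\top}M$ are both $(k-\lambda)I+\lambda J$), which is what licenses $M_i^{\top}\mathbf{1}=k_i\mathbf{1}$ in your final step.
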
 

\begin{proof} 
A standard counting argument establishes that $\lambda_{i} = \frac{k_{i}(k_{i}-1)}{v-1}$. Then 
\[\lambda = \frac{(k_{1} + k_{2})(k_{1}+k_{2})-1}{v-1} = \lambda_{1} + \lambda_{2} +  \frac{2k_{1}k_{2}}{v-1}\,.\] 
This establishes the claim about $\alpha$.

For the second claim, compute $(M_{1} + M_{2})(M_{1} + M_{2})^{\top} = M_{1}M_{1}^{\top} + M_{2}M_{2}^{\top} + M_{1}M_{2}^{\top} + M_{2}M_{1}^{\top}$ and use the formula for the Gram matrix of the incidence matrix of a symmetric design.
\end{proof} 

We will apply the theory of quadratic forms in essentially the same way as in the Bruck-Ryser-Chowla theorem. 
A quick computation shows that taking $X = M_{1}M_{2}^{\top}$ and computing necessary conditions for the existence of $X^{\top}X$ yields nothing in addition to the existence conditions for $M_{1}$ and $M_{2}$. So we are led to the following matrix. 

\begin{proposition} 
Let $Q = M_{1}M_{2}^{\top} + I$. Then $QQ^{\top} = \sigma I + \tau J$ where $\sigma = \left( k_{1} - \lambda_{1}\right)\left(k_{2} - \lambda_{2}\right) -\alpha + 1$ and $\tau = v \lambda_{1}\lambda_{2} +\lambda_{2} (k_{1} - \lambda_{1}) + \lambda_{1}(k_{2}-\lambda_{2})  + \alpha$. 
\end{proposition}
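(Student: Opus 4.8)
The plan is to expand $QQ^\top$ directly and evaluate each resulting term using the standard Gram-matrix identities for the two component designs together with the relation $M_1M_2^\top + M_2M_1^\top = \alpha(J-I)$ supplied by the preceding proposition. Writing $Q = M_1M_2^\top + I$ and noting that $Q^\top = M_2M_1^\top + I$, I would first expand
\[ QQ^\top = M_1M_2^\top M_2 M_1^\top + \left(M_1M_2^\top + M_2M_1^\top\right) + I. \]
The middle bracket is immediately $\alpha(J-I)$, and the trailing $I$ only adjusts the coefficient of $I$, so the entire problem reduces to evaluating the quartic term $M_1M_2^\top M_2 M_1^\top$.

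For that term, the key step is to insert the Gram identity for the \emph{inner} factor: since $M_2$ is the incidence matrix of a symmetric design we have $M_2^\top M_2 = (k_2-\lambda_2)I + \lambda_2 J$ (it is precisely here that symmetry of the design is used, rather than $M_2M_2^\top$). Substituting and expanding gives
\[ M_1 M_2^\top M_2 M_1^\top = (k_2-\lambda_2)\, M_1 M_1^\top + \lambda_2\, M_1 J M_1^\top. \]
The first summand is handled by $M_1M_1^\top = (k_1-\lambda_1)I + \lambda_1 J$, and for the second I would use that every row of $M_1$ sums to $k_1$, so $M_1\mathbf{1} = k_1\mathbf{1}$ and hence $M_1 J M_1^\top = k_1^2 J$. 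Collecting the coefficients of $I$ and of $J$ then yields $\sigma I + \tau J$ with $\sigma = (k_1-\lambda_1)(k_2-\lambda_2) - \alpha + 1$, exactly as claimed, and with the intermediate value $\tau = \lambda_1(k_2-\lambda_2) + \lambda_2 k_1^2 + \alpha$.

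The final step, and the only genuine (if minor) obstacle, is reconciling this expression for $\tau$ with the stated symmetric form. The term $\lambda_2 k_1^2$ is manifestly asymmetric in the two indices and must be rewritten using the counting relation $k_1(k_1-1) = (v-1)\lambda_1$ for the first component design. Substituting $k_1^2 = k_1 + (v-1)\lambda_1$ and regrouping converts $\tau$ into $v\lambda_1\lambda_2 + \lambda_2(k_1-\lambda_1) + \lambda_1(k_2-\lambda_2) + \alpha$, matching the claim. I expect no conceptual difficulty beyond keeping the two indices straight throughout the expansion and being careful to invoke $M_2^\top M_2$ as the inner factor.
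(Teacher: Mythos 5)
Your proof is correct and follows essentially the same route as the paper's own (one-line) proof: direct expansion of $QQ^{\top}$, substitution of $\alpha(J-I)$ for the cross terms, the row-sum identity $M_{1}JM_{1}^{\top} = k_{1}^{2}J$, and the counting relation $k_{1}(k_{1}-1) = (v-1)\lambda_{1}$ to put $\tau$ in the stated symmetric form. One tiny remark: under the paper's convention (rows indexed by lines, so $M^{\top}M = (k-\lambda)I + \lambda J$ holds for any $2$-design), it is the outer identity $M_{1}M_{1}^{\top} = (k_{1}-\lambda_{1})I + \lambda_{1}J$ rather than the inner $M_{2}^{\top}M_{2}$ that requires symmetry of the design, but since both component designs are symmetric, both identities hold and the computation is unaffected.
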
 

\begin{proof} 
The result follows from computing $QQ^{\top}$ directly, observing that $M_{i}$ has constant row sum and so commutes with $J$, and substituting $\alpha (J-I)$ for $M_{1}M_{2}^{\top} + M_{2}M_{1}^{\top}$. 
\end{proof}

Note that the matrix $Q$ is normal (commutes with its transpose), either from a general result due to Ryser (Theorem 8.2.1 of \cite{Ryser}), or by computing $Q^{\top}Q$ directly. 

\begin{theorem}\label{BRCMosaic}
Suppose that $M = M_{1} + M_{2}$ is a decomposition of symmetric designs.
If $v$ is even then 
\[ \left( k_{1} - \lambda_{1}\right)\left(k_{2} - \lambda_{2}\right) -\frac{2k_{1}k_{2}}{v-1} + 1\] 
is the square of an integer. If $v$ is odd, then 
\[ (\sigma, \sigma)^{\binom{v-1}{2}}_{p} (\sigma, v)_{p} = (\sigma, (-1)^{v-1/2}v)_{p} = 1\] 
for all odd primes $p$. 
\end{theorem}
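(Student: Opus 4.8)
The plan is to treat $QQ^{\top} = \sigma I_{v} + \tau J_{v}$ exactly as $DD^{\top} = nI_{v} + \lambda J_{v}$ was treated in Theorem \ref{BRC}. The essential observation is that $QQ^{\top}$ is itself a Gram matrix: writing $R = Q^{\top}$ gives $QQ^{\top} = R^{\top}R$, so $\sigma I_{v} + \tau J_{v}$ is congruent over $\mathbb{Q}$ to $I_{v}$. Consequently its discriminant is $1$ and all of its Hasse-Minkowski invariants at odd primes equal $1$. The work is then to evaluate those invariants via Proposition \ref{prop:combmat}, applied with $a-b = \sigma$, $b = \tau$, and $d = v$, so that $a - b + db = \sigma + v\tau$, and to read off the stated conditions.

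First I would record the arithmetic identity that plays the role of $k + (v-1)\lambda = k^{2}$ in the classical proof, namely $\sigma + v\tau = (k_{1}k_{2} + 1)^{2}$. This follows from a row-sum computation: since each $M_{i}$ has constant row and column sum $k_{i}$, the matrix $Q = M_{1}M_{2}^{\top} + I$ has constant row and column sum $k_{1}k_{2} + 1$, so $\mathbf{1}$ is an eigenvector of $QQ^{\top}$ with eigenvalue $(k_{1}k_{2}+1)^{2}$; comparing with $(\sigma I_{v} + \tau J_{v})\mathbf{1} = (\sigma + v\tau)\mathbf{1}$ yields the claim. Thus $\sigma + v\tau$ is a perfect square, which is the fact that trivialises every Hilbert symbol involving $a - b + db$.

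For $v$ even I would argue by discriminant. We have $\det(QQ^{\top}) = \det(Q)^{2}$, a perfect square, while $\det(\sigma I_{v} + \tau J_{v}) = (\sigma + v\tau)\sigma^{v-1}$. Since $\sigma + v\tau$ is a square and $v-1$ is odd, the square-free part of this determinant coincides with that of $\sigma$; for the discriminant to equal $1$ we must therefore have $\sigma$ a perfect square. As $\sigma = (k_{1}-\lambda_{1})(k_{2}-\lambda_{2}) - \alpha + 1$ with $\alpha = \frac{2k_{1}k_{2}}{v-1}$, this is exactly the first assertion. For $v$ odd I would substitute into Proposition \ref{prop:combmat} to obtain the local invariant
\[ (\sigma + v\tau, \sigma)_{p}^{v-1}\,(\sigma, \sigma)_{p}^{\binom{v-1}{2}}\,(\sigma, v)_{p}\,(\sigma + v\tau, v)_{p}\,. \]
Because $\sigma + v\tau$ is a perfect square, every symbol with $\sigma + v\tau$ as an argument is $1$, collapsing the expression to $(\sigma, \sigma)_{p}^{\binom{v-1}{2}}(\sigma, v)_{p}$. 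Now $(\sigma,\sigma)_{p} = (\sigma, -1)_{p}$ since $(\sigma, -\sigma)_{p} = 1$, and $\binom{v-1}{2}$ is even for $v \equiv 1 \bmod 4$ and odd for $v \equiv 3 \bmod 4$, so $(\sigma,\sigma)_{p}^{\binom{v-1}{2}} = (\sigma, (-1)^{(v-1)/2})_{p}$. Bilinearity then combines this with $(\sigma, v)_{p}$ to give $(\sigma, (-1)^{(v-1)/2}v)_{p}$, which must equal $1$ at every odd prime because $QQ^{\top}$ is a Gram matrix.

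The routine parts are the two determinant and row-sum calculations together with the parity bookkeeping for $\binom{v-1}{2}$, all of which parallel Theorem \ref{BRC}. The one step demanding genuine care is the identity $\sigma + v\tau = (k_{1}k_{2}+1)^{2}$: confirming that the \emph{large} eigenvalue of $QQ^{\top}$ is a perfect square is precisely what forces the Hilbert symbols involving $a - b + db$ to vanish, and without it the clean reduction to $(\sigma, (-1)^{(v-1)/2}v)_{p}$ would fail.
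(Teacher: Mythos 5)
Your proposal is correct and takes essentially the same approach as the paper: the same row-sum identity $\sigma + v\tau = (k_{1}k_{2}+1)^{2}$, the same determinant argument for even $v$, and the same application of Proposition \ref{prop:combmat} with $a-b=\sigma$, $b=\tau$, $d=v$, followed by the parity and bilinearity simplification to $(\sigma,(-1)^{(v-1)/2}v)_{p}$. The only difference is expository: you spell out the collapse of the symbols involving the perfect square $\sigma+v\tau$ and the final reduction, steps the paper compresses into ``as in the proof of the Bruck-Ryser-Chowla theorem.''
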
 

\begin{proof} 
Since the matrix $M_{1}M_{2}^{\top} + I$ has constant row sum, it has the all-ones vector as an eigenvector, 
with eigenvalue $k_{1}k_{2} + 1$. Hence $MM^{\top}$ has constant row sum equal to $(k_{1}k_{2}+1)^{2}$. 
Comparing to the expression for $MM^{\top} = \sigma I + \tau J$, we find that 
\[ \sigma + v\tau = (k_{1}k_{2} + 1)^{2} \,.\] 
The determinant of $\sigma I_{v} + \tau J_{v}$ is $(\sigma + v\tau) \sigma^{v-1}$. Thus if $v$ is even, one requires 
that $\sigma$ is a square.

Since $\sigma + v\tau$ is square, the application of Proposition \ref{prop:combmat} with $a = \sigma + \tau$ and $b = \tau$ results in the expression
\[ (\sigma, \sigma)^{\binom{v-1}{2}}_{p} (\sigma, v)_{p} \,.\] 
As in the proof of the Bruck-Ryser-Chowla theorem, this may be further simplified by considering $v\mod 4$ and
using properties of the Hilbert symbol to achieve the claimed result.
\end{proof} 

A computer search for feasible parameters (i.e., satisfying integrality and Bruck-Ryser-Chowla conditions for each
 component design) shows very few feasible parameter sets. 
\begin{corollary} 
If $v$ is even, there is no decomposition of symmetric designs on less than 10,000 points. 
\end{corollary}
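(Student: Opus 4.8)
The plan is to verify the claim by an exhaustive but elementary finite search: for even $v$ the existence of a decomposition forces a short list of necessary arithmetic conditions, and we only need to confirm these are jointly unsatisfiable for $v < 10000$.

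First I would delimit the search space. A decomposition is specified by a triple $(v, k_1, k_2)$ with $v$ even, $1 \le k_1 \le k_2$ and $k = k_1 + k_2 \le v - 1$; for each even $v < 10000$ this is a finite set. For each candidate I would impose the integrality conditions established in the counting results preceding Theorem \ref{BRCMosaic}: both $\lambda_i = k_i(k_i - 1)/(v-1)$ and $\alpha = 2k_1 k_2/(v-1)$ must be non-negative integers. Since $v - 1$ is odd, divisibility by $v-1$ is already very restrictive and eliminates the overwhelming majority of triples.

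Next I would apply the necessary conditions. Because $v$ is even, Theorem \ref{BRC} requires that each component order $k_i - \lambda_i$ and the ambient order $k - \lambda$ be perfect squares; imposing these on the surviving triples leaves only the handful of feasible parameter sets mentioned in the text. To each of these I would apply the new condition from Theorem \ref{BRCMosaic}: for even $v$ the quantity
\[ \sigma = (k_1 - \lambda_1)(k_2 - \lambda_2) - \alpha + 1 \]
must itself be a perfect square. Writing $k_i - \lambda_i = s_i^2$ (which holds precisely because $v$ is even), this becomes $\sigma = (s_1 s_2)^2 - (\alpha - 1)$, so the test reduces to asking whether $(s_1 s_2)^2 - (\alpha - 1)$ is a square. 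Since $2k_1k_2$ is even while $v-1$ is odd, $\alpha \ne 1$, and for a nontrivial decomposition $\alpha \ge 2$; hence setting $\sigma = t^2$ gives $(s_1 s_2 - t)(s_1 s_2 + t) = \alpha - 1$, which forces $s_1 s_2$ to be small relative to $\alpha$ and is trivially checked. Running through the finitely many survivors and confirming that none passes this last test completes the argument.

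The main obstacle is not conceptual but one of rigour: I must be certain the enumeration is genuinely exhaustive over the stated range and that each necessary condition has been encoded without error, so that the emptiness of the final list is trustworthy. I would therefore cross-check the integrality and square tests against an independent implementation and verify the boundary cases (for instance $\sigma = 0$, or components with $\lambda_i = 0$) by hand. I should also emphasise that the result is intrinsically bounded: the method yields non-existence only for $v < 10000$, and extending the range requires only more computation rather than a new idea, in the absence of a uniform number-theoretic obstruction extracted from the relation $(s_1 s_2 - t)(s_1 s_2 + t) = \alpha - 1$.
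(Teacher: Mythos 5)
Your proposal is correct and takes essentially the same approach as the paper: a finite computer search over even $v<10000$ imposing the integrality conditions on $\lambda_i$ and $\alpha$ together with the even-$v$ Bruck--Ryser--Chowla square conditions, followed by the squareness test on $\sigma$ from Theorem~\ref{BRCMosaic}. The paper's search leaves exactly one surviving parameter set, $(2380,976,400)$ splitting as $(2380,183,14)+(2380,793,264)$, which is then eliminated since $\sigma = 13^2\cdot 23^2 - 121 = 2^6\cdot 3^2\cdot 5\cdot 31$ is not a perfect square; your factorization $(s_1s_2-t)(s_1s_2+t)=\alpha-1$ is only a minor refinement of that same final test.
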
 

\begin{proof} 
There is only one parameter set on less than $10,000$ points for which a decomposition could be possible: the trivial conditions for a $(2380, 976, 400)$-design to decompose into a $(2380, 183, 14)$-design and a $(2380, 793, 264)$-design are that $976 - 400 = 24^{2}$ and $183 - 14 = 13^{2}$ and $793 - 264 = 23^{2}$. But Theorem \ref{BRCMosaic} requires that $13^2\cdot 23^2 - 121 = 2^{6} \cdot 3^{2}\cdot 5\cdot 31$ be a perfect square, which it is not. 
\end{proof} 

In contrast, the conditions at odd orders are rather weaker. We observe that the incidence matrix of a $(91,81,72)$-design (the complementary design of a projective plane of order $9$) cannot be written as the sum of designs with parameters 
$(91, 36, 14)$-design and a $(91, 45, 22)$-design. The relevant parameters for the computation are
\[ k_{1} = 36, \,\, \lambda_{1} = 14, \,\,\, k_{2} = 45, \,\, \lambda_{2} = 22, \,\,\, \alpha = 36, \,\,\, \sigma = 471 \] 
The local invariants are $(471, 471)_{p}(471, 91)_{p}$ for all primes $p$. The prime $3$ divides $471$, so the invariant at $p = 3$ simplifies to $(3,3)_{p} (1, 3)_{p} = -1$. So Theorem \ref{BRCMosaic} shows that this decomposition does not exist. 

These methods \textbf{cannot} rule out the existence of a $(31, 25, 20)$-design (the complement of a projective plane of order 5) which decomposes into a $(31, 15, 7)$-design and a $(31, 10, 3)$-design. This is the smallest open case for a decomposition. Finally, we observe that solutions to this problem do exist, the sum of a skew-Hadamard design with parameters $(4t-1, 2t-1, t-1)$ with a trivial $(4t-1, 1,0)$-design gives a $(4t-1, 2t, t)$-design, so the concept is not vacuous \cite{mypaper-nesting}. This topic will be considered more fully in forthcoming work of the authors.

\subsection{Bose-Connor theorem} 

The Bose-Connor theorem gives non-existence conditions for \textit{group-divisible designs}. 

\begin{definition} 
Let $V$ be a set of size $mn$, divided into $m$ \textit{groups} of size $n$. 
Let $B$ be a set of blocks, each of size $k$. Then $(V, B)$ is a group-divisible 
design with parameters $(mn, n, k, \lambda_{1}, \lambda_{2})$ if any pair of points 
from the same group occurs in $\lambda_{1}$ blocks and any pair of points from distinct 
blocks occurs together in $\lambda_{2}$ blocks. 
\end{definition} 

Standard counting arguments show that each point appears in $r = \frac{(n-1)\lambda_{1} + n(m-1)\lambda_{2}}{k-1}$ blocks (and integrality of this quantity is a necessary condition for the existence of a group-divisible design). A group-divisible design is \textit{symmetric} if the incidence matrix is square, in which case $r = k$ as in the usual theory of symmetric designs. 

Recall that the Kronecker product of matrices $A$ and $B$ is given 
(as a block-matrix) by $[A \otimes B]_{ij} = a_{ij}B$. In particular, $I_{m} \otimes J_{n}$ is an $mn \times mn$ matrix with $n\times n$ blocks of ones on the diagonal and zeros elsewhere. It follows from the definition that the incidence matrix of a symmetric group-divisible design is $G = (r-\lambda_{1}-\lambda_{2})I + (\lambda_{1} - \lambda_{2}) I_{m} \otimes J_{n} + \lambda_{2}J_{mn}$. Non-existence conditions can be derived from the theory of quadratic forms by finding conditions under which $G$ is not a Gram matrix. We refer the interested reader to the original paper for a proof. 

\begin{theorem}[Bose-Connor, \cite{BoseConnor}] 
Suppose that $D$ is a symmetric group divisible design with parameters $(mn, n, k, \lambda)$, 
and denote $Q = k - \lambda_{1}$ and $P = k^{2} - v\lambda_{2}$. Then the following conditions hold: 
\begin{enumerate} 
\item $(n-1)\lambda_{1} + n(m-1)\lambda_{2} = k(k-1)$.
\item $P> 0$ and $Q > 0$. 
\item $P^{m-1}Q^{m(n-1)}$ is a perfect square. 
\item If $m$ is even then $P$ is a perfect square. If $m \equiv 2 \mod 4$ and $Q$ is even then $(Q, -1)_{p} = 1$ for all odd primes $p$. 
\item If $m$ is odd and $n$ is even then $Q$ is a perfect square. Furthermore, $( (-1)^{\binom{m}{2}}n\lambda_{2}, P)_{p} = 1$ for all odd primes $p$. 
\item If $m$ and $n$ are odd then $( (-1)^{\binom{m}{2}}n\lambda_{2}, P)_{p} = ( (-1)^{\binom{n}{2}}n, Q)_{p}$ for all odd primes $p$. 
\end{enumerate}
\end{theorem}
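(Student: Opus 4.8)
The plan is to follow the template of the Bruck--Ryser--Chowla argument (Theorem \ref{BRC}) and its refinement (Theorem \ref{BRCMosaic}), now applied to the Kronecker-structured Gram matrix $G=DD^{\top}$. Conditions (1) and (2) are not quadratic-form statements: (1) is the standard replication count obtained by double counting incident point--block pairs, and (2) records that $DD^{\top}$ is positive definite (the incidence matrix of a symmetric group-divisible design is invertible), so its eigenvalues are strictly positive. A direct computation using (1) shows that the eigenvalues of $G$ are $k^{2}$ with multiplicity $1$, the quantity $P=k^{2}-v\lambda_{2}$ with multiplicity $m-1$, and $Q=k-\lambda_{1}$ with multiplicity $m(n-1)$ (note $1+(m-1)+m(n-1)=mn=v$). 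Since $\det(DD^{\top})=(\det D)^{2}$ is a perfect square and $k^{2}$ is already square, condition (3) is exactly the assertion that $P^{m-1}Q^{m(n-1)}$ is a square. The remaining conditions (4)--(6) are the content of Theorem \ref{mainGramthm}: as a Gram matrix $G$ is congruent to $I_{mn}$, so $H(G,p)=1$ at every odd prime, and (4)--(6) are what this says after simplification, split by the parities of $m$ and $n$.

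To compute $H(G,p)$ I would first polarise $G$ rationally, exploiting that $G=Q\,(I_{m}\otimes I_{n})+(\lambda_{1}-\lambda_{2})(I_{m}\otimes J_{n})+\lambda_{2}(J_{m}\otimes J_{n})$ is a combination of commuting matrices. Let $F_{\ell}$ be the rational matrix of Proposition \ref{polarBRC} whose columns form an orthogonal eigenbasis of $J_{\ell}$, so that $F_{\ell}^{\top}J_{\ell}F_{\ell}=\langle \ell^{2},0,\dots,0\rangle$ and $F_{\ell}^{\top}I_{\ell}F_{\ell}=\langle \ell,\,2\cdot 1,\dots,\ell(\ell-1)\rangle$. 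Setting $F=F_{m}\otimes F_{n}$ and using $(A\otimes B)^{\top}(C\otimes D)(A\otimes B)=(A^{\top}CA)\otimes(B^{\top}DB)$, the matrix $F^{\top}GF$ becomes diagonal, and its diagonal is the \emph{entrywise} product of an eigenvalue form $E=\langle k^{2},\,P^{m-1},\,Q^{m(n-1)}\rangle$ with a factor form $B=F^{\top}F=(F_{m}^{\top}F_{m})\otimes(F_{n}^{\top}F_{n})$ whose entries are the products $\ell\cdot i(i-1)$. The key observation, exactly as in Theorem \ref{thm:polarising}, is that $B$ equals $F^{\top}I_{mn}F$, so it is congruent to the identity; hence all its local invariants vanish and its discriminant is $1$.

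Next I would apply Proposition \ref{prop:split} to the product $F^{\top}GF=E\cdot B$. Because $H(B,p)=1$ and $\Delta B=1$, the formula collapses to $H(G,p)=H(E,p)\prod_{a}(e_{a},b_{a})_{p}$, where $e_{a}\in\{k^{2},P,Q\}$ are the eigenvalues and $b_{a}$ the matching factors. The diagonal product telescopes by the same cancellation as in Theorem \ref{thm:polarising}, leaving only boundary terms of the shape $(Q,n)_{p}$, $(P,n)_{p}$, $(P,m)_{p}$, while $H(E,p)$ reduces by bilinearity to $(Q,Q)_{p}^{\binom{m(n-1)}{2}}(P,P)_{p}^{\binom{m-1}{2}}(P,Q)_{p}^{m(n-1)(m-1)}$. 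Collecting these gives a closed expression for $H(G,p)$ as a product of Hilbert symbols in $P,Q,m,n$ with explicit integer exponents, which must equal $1$.

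The final and most delicate step is to extract the stated conditions from this equality, case by case on $m,n\bmod 4$. The perfect-square clauses come from condition (3): when $m$ is even the odd exponent $m-1$ forces $P$ to be square, and when $m$ is odd and $n$ even the odd exponent $m(n-1)$ forces $Q$ to be square. For the Hilbert-symbol clauses I would reduce $(Q,Q)_{p}$ and $(P,P)_{p}$ to sign symbols via $(x,x)_{p}=(-1,x)_{p}$, determine the parity of each binomial exponent (this is exactly what separates cases (4), (5) and (6)), and invoke the design identity $P=k^{2}-v\lambda_{2}$ together with the explicit-solution trick $(P,k^{2}-P)_{p}=(P,v\lambda_{2})_{p}=1$ --- the analogue of $(n,k^{2}-n)_{p}=1$ from Theorem \ref{BRC} --- to convert symbols in $v=mn$ into symbols in $n\lambda_{2}$ and to absorb the factors $(-1)^{\binom{m}{2}}$ and $(-1)^{\binom{n}{2}}$. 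I expect this bookkeeping --- matching the parities of the several binomial exponents and massaging the arguments into the precise forms $(-1)^{\binom{m}{2}}n\lambda_{2}$ and $(-1)^{\binom{n}{2}}n$ --- to be the main obstacle; by contrast the linear algebra, the Kronecker polarisation, and the telescoping are routine once Propositions \ref{polarBRC} and \ref{prop:split} are in hand.
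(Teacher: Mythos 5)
You should know at the outset that the paper does not prove this theorem: after writing down the Gram matrix of a symmetric group-divisible design it says ``We refer the interested reader to the original paper for a proof.'' So there is no in-paper argument to compare against; your proposal supplies the proof the paper only gestures at, and it does so with exactly the paper's toolkit (Proposition \ref{polarBRC}, Proposition \ref{prop:split}, the telescoping of Theorem \ref{thm:polarising}, and the explicit-solution trick of Theorem \ref{BRC}). The core of the plan is correct and the computations it calls for do close. The eigenvalues of $G$ are $k^{2}$, $P$ (multiplicity $m-1$) and $Q$ (multiplicity $m(n-1)$); the Kronecker polarisation $F=F_{m}\otimes F_{n}$ diagonalises $G$ because $F_{\ell}^{\top}J_{\ell}F_{\ell}=\langle \ell^{2},0,\ldots,0\rangle$; the factor form $B=F^{\top}F$ is congruent to $I_{mn}$ and has trivial invariants; and Proposition \ref{prop:split} together with the telescoping products yields
\[ H(G,p)=(P,P)_{p}^{\binom{m-1}{2}}\,(Q,Q)_{p}^{\binom{m(n-1)}{2}}\,(P,Q)_{p}^{(m-1)m(n-1)}\,(P,m)_{p}\,(P,n)_{p}^{m-1}\,(Q,n)_{p}^{m}=1\,. \]
The parity bookkeeping you defer to the end does work out: for odd $m$ one has $\binom{m-1}{2}\equiv\binom{m}{2}\pmod 2$, and the identity $(P,mn\lambda_{2})_{p}=(P,k^{2}-P)_{p}=1$ converts $(P,m)_{p}$ into $(P,n\lambda_{2})_{p}$, after which clauses (5) and (6) come out verbatim, signs included.

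Two caveats. First, in clause (4) your method produces the hypothesis ``$m\equiv 2\pmod 4$ and $n$ even'' (for $m\equiv 2\pmod 4$ the exponent $\binom{m(n-1)}{2}$ is odd exactly when $n$ is even); it cannot produce ``$Q$ even'' as printed, because when $n$ is odd every exponent in the display above is even, so the odd-prime invariants are trivially $1$, and the invariant at $p=2$ is then forced by the product formula --- no obstruction exists. The condition with ``$n$ even'' is Bose and Connor's original one, and you should treat the printed hypothesis as a misstatement of it. Second, your justification of (2), that the incidence matrix of a symmetric group-divisible design is invertible, is false in general: a transversal design arising from an affine plane ($m=n$ groups, $\lambda_{1}=0$, $\lambda_{2}=1$, $k=n$) is a symmetric group-divisible design with $P=k^{2}-v\lambda_{2}=0$ and singular incidence matrix. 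The inequalities $P>0$, $Q>0$ define the \emph{regular} designs, which is the class the Bose--Connor theorem actually concerns; (2) must be taken as a standing regularity hypothesis rather than derived, and invertibility of $D$ then follows since $\det(DD^{\top})=k^{2}P^{m-1}Q^{m(n-1)}\neq 0$. Neither caveat touches the quadratic-form core of your argument, which is sound.
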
 

\subsection{Maximal determinant matrices}

The \textit{Hadamard maximal determinant problem} asks for the maximal determinant of a $\{\pm 1\}$ matrix in dimension $n$. If $4$ divides $n$, then the optimal solution is a Hadamard matrix, provided that one exists. (This is the original motivation for the Hadamard conjecture.) No obstructions to the existence of Hadamard matrices arise from the theory of quadratic forms. The following result is compiled from the work of a number of mathematicians, principally Ehlich, Wojtas and Cohn. While the first two parts can be obtained without using quadratic forms, that theory allows for a uniform proof technique (essentially by computing the obvious invariants). 

\begin{theorem}[See \cite{MaxDet}]
Let $D_{n}$ be the absolute value of the maximal determinant of an $n \times n$ matrix with entries in $\{\pm 1\}$.
\begin{enumerate} 
\item If $n \equiv 1 \mod 4$ then $D_{n} \leq \sqrt{2n-1}(n-1)^{\frac{n-1}{2}}$. If the bound is met with equality, then $(n-1)I_{n}+ J_{n}$ is a Gram matrix and $2n-1$ is a perfect square. 
\item If $n \equiv 2 \mod 4$ then $D_{n}\leq (2n-2)(n-2)^{\frac{n-2}{2}}$. If the bound is met with equality then $I_{2} \otimes \left( (n-2)I_{n/2} + 2J_{n/2} \right)$ is a Gram matrix and $2n-2$ is the sum of two squares. 
\item If $n \equiv 3 \mod 4$ and $n \geq 63$ then $D_{n} \leq \frac{2 \cdot 11^{3}}{7^{\frac{7}{2}}} n(n-1)^{3}(n-3)^{\frac{n-7}{2}}$. If the bound is met with equality then $I_{7} \otimes \left( (n-3) I_{n/7} + 4J_{n/7} \right) - J_{n}$ is a Gram matrix. Letting $n = 7m$, this implies that $4m-3$ is a perfect square and that $(11m-3, -(7m-3))_{p} = 1$ for all odd primes $p$.
\end{enumerate} 
\end{theorem}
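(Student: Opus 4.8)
The plan is to study $M$ through its Gram matrix $G = M^{\top}M$, whose determinant is $\det(M)^2$. Since the rows of $M$ are $\{\pm 1\}$-vectors of length $n$, the matrix $G$ is a positive definite integer matrix with every diagonal entry equal to $n$ and every off-diagonal entry congruent to $n \bmod 2$. The three determinant bounds are then instances of a single optimisation: maximise $\det(G)$ over symmetric positive definite integer matrices with this prescribed diagonal and parity. This optimisation, together with the identification of the extremal $G$, is the classical content due to Barba ($n \equiv 1$), Ehlich--Wojtas ($n \equiv 2$) and Ehlich ($n \equiv 3$); I would recall these results, whose upshot is that equality in each bound forces $G$ to equal, up to a simultaneous row/column permutation, the explicit combinatorial matrix named in that case. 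In particular, equality makes that explicit matrix a Gram matrix, which is exactly the hypothesis needed to invoke Theorem \ref{mainGramthm}.

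The remaining, uniform, step is to compute the obvious invariants of each extremal matrix and read off the stated arithmetic. For $n \equiv 1 \bmod 4$ the matrix is $(n-1)I_n + J_n$, with determinant $(2n-1)(n-1)^{n-1}$; since the exponent $n-1$ is even, the factor $(n-1)^{n-1}$ is a perfect square, so Theorem \ref{mainGramthm}(1), forcing discriminant $1$, shows $2n-1$ is a perfect square. For $n \equiv 2 \bmod 4$ the matrix is the orthogonal direct sum of two copies of $C = (n-2)I_{n/2} + 2J_{n/2}$, whose determinant is automatically a square, so the discriminant gives nothing and I would instead compute the Hasse--Minkowski invariant. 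The invariant of the direct sum is $H(C,p)^2(\det C, \det C)_p = (\det C, \det C)_p$, and since $n/2-1$ is even the square-free part of $\det C = (2n-2)(n-2)^{n/2-1}$ equals that of $2n-2$, so this collapses to $(2n-2, 2n-2)_p$. By the dimension-two classification (Theorem \ref{HMdim2}, equivalently Theorem \ref{dim2class}), this equals $1$ at every odd prime precisely when no prime $\equiv 3 \bmod 4$ divides the square-free part of $2n-2$, that is, when $2n-2$ is a sum of two squares.

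The case $n \equiv 3 \bmod 4$, $n = 7m$, proceeds the same way but requires polarising the Kronecker-structured matrix $G = I_7 \otimes ((n-3)I_m + 4J_m) - J_n$. Using a rational eigenbasis adapted to $\mathbb{Q}^7 \otimes \mathbb{Q}^m$ (as in Proposition \ref{polarBRC}), I find eigenvalues $4m-3$, $11m-3$, $7m-3$ with multiplicities $1$, $6$, $7m-7$, so $\det G = (4m-3)(11m-3)^{6}(7m-3)^{7m-7}$. Because $m \equiv 1 \bmod 4$ the last two factors are perfect squares, so the discriminant condition of Theorem \ref{mainGramthm} yields that $4m-3$ is a perfect square. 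For the Hasse--Minkowski invariant I would polarise exactly as in Theorem \ref{thm:polarising}, writing $G$ as congruent to a product $AB$ in the sense of Proposition \ref{prop:split}, where $A$ is the diagonal form of eigenvalues and $B$ is a combinatorial correction form whose local invariants are all $1$ and whose telescoping terms $\prod_i (a_i,b_i)_p$ cancel as in that proof. Since $4m-3$ is a square it drops from $H(A,p)$, the repeated eigenvalues contribute through $\binom{6}{2}$ and $\binom{7m-7}{2}$, and after the telescoping and the substitution $(x,x)_p = (x,-1)_p$ used in the proof of Theorem \ref{BRC} the surviving invariant simplifies to $(11m-3, -(7m-3))_p$, which is the stated condition.

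The main obstacle is not the quadratic-form bookkeeping, which is entirely routine once Propositions \ref{prop:combmat} and \ref{prop:split} are in hand, but the determinant bound and extremal characterisation in the third case. Ehlich's analysis of matrices with diagonal $n$ and even off-diagonal entries proceeds by a delicate block decomposition, and the hypothesis $n \geq 63$ is exactly what guarantees that the optimal number of blocks is seven, so that the named matrix is the unique extremal Gram matrix. I would treat this bound as an external input, citing \cite{MaxDet}, and devote the new argument to the invariant computation, where the theory of Section~4 supplies a single method covering all three congruence classes.
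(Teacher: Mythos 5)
Your overall strategy---importing the Barba, Ehlich--Wojtas and Ehlich determinant bounds and the characterisation of the extremal Gram matrices as external input, then deriving the arithmetic conditions by computing discriminants and Hasse--Minkowski invariants---is exactly what the paper intends; the paper gives no detailed proof of this theorem, only the remark that the conditions follow ``by computing the obvious invariants''. Your cases 1 and 2 are correct: the discriminant of $(n-1)I_n+J_n$ forces $2n-1$ to be a square, and for $n\equiv 2\bmod 4$ the direct-sum identity $H(C\oplus C,p)=(\det C,\det C)_p=(2n-2,2n-2)_p$ combined with Theorem \ref{HMdim2} gives the sum-of-two-squares condition. (Your treatment of case 2 is in fact more accurate than the paper's own remark, which attributes that case to the discriminant; the determinant of $I_2\otimes C$ is automatically a square, and the condition really does come from the Hilbert symbols.)

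Case 3, however, contains a genuine gap. Your eigenvalue computation is right, and the correction form $B$ arising from the eigenbasis $F_7\otimes F_m$ does satisfy $H(B,p)=1$ and $\Delta B=1$. But the product $\prod_i(a_i,b_i)_p$ in Proposition \ref{prop:split} does \emph{not} cancel as you assert: writing $\alpha=4m-3$, $\beta=11m-3$, $\gamma=7m-3$, it telescopes to $(\alpha,7m)_p\,(\gamma,m)_p\,(\beta,7)_p$, of which only the first factor dies when $\alpha$ is a square. If these residual terms are discarded, what survives is $H(A,p)=(\beta,\beta)_p^{\binom{6}{2}}(\gamma,\gamma)_p^{\binom{7m-7}{2}}=(\beta,-1)_p=(11m-3,-1)_p$ (the second exponent is even because $m\equiv 1\bmod 4$), which is a genuinely different condition from the stated $(11m-3,-(7m-3))_p$; carried out as written, your computation proves the wrong statement. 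To close the argument you must keep the residual terms and use the arithmetic relations among the eigenvalues, exactly as $(n,k^2-n)_p=1$ is used in the proof of Theorem \ref{BRC}: from $\beta=\alpha+7m$ with $\alpha=s^2$ the equation $\beta x^2-7my^2=z^2$ has the solution $(1,1,s)$, so $(\beta,-7m)_p=1$ and hence $(\beta,7)_p=(\beta,-m)_p$, collapsing the invariant to $(\beta\gamma,m)_p$; then from $\beta=\gamma+4m$ one gets $(\beta\gamma,-m\gamma)_p=1$, whence $(\beta\gamma,m)_p=(\beta\gamma,-\gamma)_p=(\beta,-\gamma)_p$. With these two identities inserted your argument yields exactly $(11m-3,-(7m-3))_p$; without them it does not.
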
 

The first two cases follow from requiring the discriminant of the Gram matrix to be a perfect square, in which case all local invariants become trivial as in the proof of the even case of Theorem \ref{BRC}. In the third case, the theory is rather more complicated and the optimal Gram matrix is not known to be realised by a $\{\pm 1\}$-matrix for any integer $n$. The conditions imposed by local invariants are non-trivial in this case however, and the smallest order which is not ruled out is $n = 511$, \cite{Tamura}. 

\section{Acknowledgements} 

P\'OC acknowledges support from the European Consortium of Innovative Universities; the Conference Participation Scheme of the Faculty of Humanities and Social Sciences at Dublin City University and a Learning Enhancement Project of the Technical University of the Shannon: Midlands Midwest. P\'OC thanks Drs Patrick Browne and Ronan Egan for delivering workshops at which the material of this paper was presented and the exposition refined. Financial support for the workshops from the School of Mathematical Sciences and Fiontar agus Scoil na Gaeilge, both at Dublin City University, is gratefully acknowledged. 

We thank the anonymous referee for a careful and thorough proof-reading and the Editor-in-Chief for his handling of the article. 

\bibliographystyle{abbrv}
\flushleft{
\bibliography{Biblio2020}
}

\end{document}